\theoremstyle{theorem}
\newtheorem{theorem}{Theorem}[section]
\newtheorem{corollary}[theorem]{Corollary}
\newtheorem{lemma}[theorem]{Lemma}
\newtheorem{proposition}[theorem]{Proposition}
\numberwithin{equation}{section}
\title{Rational twisted  series} 
\author{Masood Aryapoor\\
\tiny{\textit{Division of Mathematics and Physics}}\\
\tiny{\textit{M\"{a}lardalen  University}}\\
\tiny{\textit{Hamngatan 15, 632 17, Eskilstuna, 
Sweden
}}

}
 \date{}
\begin{document}
 \maketitle
\begin{abstract}
\noindent
 Rational twisted power series over a (commutative) filed are studied. We give several characterizations of such series which are similar to the classical results concerning rational power series over a commutative field. In particular, we prove a version of Kronecker's lemma for the rationality of twisted power series.  \\
\textit{Keywords}: Rational Twisted series, $\sigma$-Recognizable series, Syntactic left ideal, Twisted Hadamard algebra  
\end{abstract}
\begin{section}{Introduction} 

The study of  rational formal power series has led to a rich theory with applications in areas such as number theory and formal language theory, see  \cite{Salem_1983} and \cite{Berstel-Reutenauer_1988}. The main purpose of this paper is to develop a theory of rational twisted formal power series over a commutative field, similar to the theory of rational formal power series as presented in \cite{Berstel-Reutenauer_1988}.

The paper is organized as follows. Section 2 is devoted to preliminaries. Most of the results in this section are classical. The only new result is Kronecker's lemma for rational twisted formal power series, see Subsection \ref{Kroneckerlemma}. In Section 3, the concept of $\sigma$-recognizable formal power series is introduced and studied. The most important result of this section is that a twisted formal power series is $\sigma$-recognizable iff it is rational. It is also shown that the Hadamard product of rational twisted formal power series is rational. Section 4 deals with the notion of syntactic left ideal which is used to give a characterization of rational twisted formal power series. More precisely, it is shown that a twisted formal power series is rational iff its syntactic left ideal is nonzero. The next section is devoted to reduced  $\sigma$-linear representations. In the final section of the paper, we investigate the connection between fractions of the twisted polynomial ring and syntactic left ideals. Also, the notion of regularity is introduced and it is shown that the ableian group of regular twisted formal power series equipped with the Hadamard product is a commutative algebra.   

Throughout this paper, $K$ is  a fixed commutative field, and $\sigma:K\to K$ is a fixed endomorphism of $K$. By "vector space", we shall mean "vector space over $K$". The notation $K^{m\times n}$ is used to denote the vector space of all $m\times n$ matrices with entries in $K$. 
\end{section} 



\begin{section}{Twisted series}
	In this section, we review basic facts about twisted polynomial rings and twisted formal power series, see \cite{Cohn_2006} for more details.  
	\begin{subsection}{The ring $K[T;\sigma]$} 	
		Every nonzero element $P$ of the ring $K[T;\sigma]$ of twisted polynomials over $K$ can uniquely be written as 
		$$P=a_0+a_1T+\cdots+a_n T^n=\sum_{i=0}^n a_i T^i,$$
		where $n\geq 0$ and $a_0,...,a_n\in K$ with $a_n\neq 0$. The number $n$, denoted by  $\deg P$, is called the degree of $P$. The notation $P(0)$ will be used to denote the constant term $a_0$. In this ring, we have $Ta=\sigma(a)T$ for all $a\in K$, and more generally, $$\left( \sum_{i}a_i T^i \right) \left( \sum_{j}b_j T^j \right)=
		 \sum_{n}\left( \sum_{i+j=n}a_i\sigma^i(b_j)\right)  T^n.$$
		
	\end{subsection}
	\begin{subsection}{The field of fractions of $K[T;\sigma]$} 	
		It is known that $K[T;\sigma]$ is an integral domain. Moreover, the Euclidean algorithm holds for right division, that is, given $P,Q\in K[T;\sigma]$ with $Q\neq 0$ there exist $S,R\in K[T;\sigma]$ such that $\deg R<\deg Q$ and $P=SQ+R$. It follows that $K[T;\sigma]$ is a left principal ideal domain. Since    $K[T;\sigma]$  is left Noetherian, we see that $K[T;\sigma]$  is a left Ore domain, and consequently, has a field of fractions which is denoted by $K(T;\sigma)$. Any element of $K(T;\sigma)$ can be written as a left fraction $P^{-1}Q$ where $P,Q\in  K[T;\sigma]$ with $P\neq 0$. 	We remark that if $\sigma$ is an automorphim, then $K[T;\sigma]$ is also a right principal ideal domain and a right Ore domain.  	
	\end{subsection}

	\begin{subsection}{The ring of twisted formal power series} 	
		Every element $f$ of the ring $K[[T;\sigma]]$ of twisted formal series over $K$ can uniquely be written as 
		$$f=\sum_{i=0}^\infty a_i T^i=a_0+a_1T+\cdots+a_n T^n+\cdots,$$
		where  $a_0,a_1,...\in K$. For simplicity, elements of $K[[T;\sigma]]$ will be called \textit{twisted series}. As before, $f(0)$ denotes the constant term $a_0$. The ring $K[[T;\sigma]]$ can be considered as a topological ring in which the left ideals $K[[T;\sigma]]T^n$, where $n\geq 0$, form a neighborhood basis of $0$. Note that an element $f\in K[[T;\sigma]]$ has an inverse $f^{-1}\in K[[T;\sigma]]$ iff $f(0)\neq 0$. In fact, if $g\in  K[[T;\sigma]]$  satisfies $g(0)=0$, then 
		$$(1-g)^{-1}=\sum_{i=0}^{\infty }g^i.$$		
		We also have the ring of twisted formal Laurent series  	$K((T;\sigma))$ which contains 	 $K[[T;\sigma]]$  as a subring.  Every nonzero element $f$ of the ring $K((T;\sigma))$ can uniquely be written as 
		$$f=\sum_{i=-m}^\infty a_i T^i=a_{-m}T^{-m}+\cdots+a_{-1}T^{-1}+a_0+a_1T+\cdots,$$
		where $m\in\mathbb{Z}$ and $a_{-m}, a_{-m+1},...\in K$ with $a_{-m}\neq 0$. It is known that $K((T;\sigma))$ is a (skew) field and we have a natural embedding of $K(T;\sigma)$ in $K((T;\sigma))$. In this paper, we consider  $K(T;\sigma)$ as a subfield of $K((T;\sigma))$ under this natural embedding.  
	\end{subsection}
	\begin{subsection}{Rational twisted series} 	
		A twisted series  $f\in K[[T;\sigma]]$ is called rational if $f\in K(T;\sigma)$, that is, there exist twisted polynomials $P,Q\in K[T;\sigma]$ such that $P\neq 0$ and 
		$$f=P^{-1}Q.$$
		We give a proof of the following known rationality criterion, see Theorem 1.5.6 in \cite{Cohn_2006}.  
		\begin{theorem}\label{(T)rationalitycriterion1}
			A twisted series $$f=\sum_{i=0}^\infty a_iT^i\in K[[T;\sigma]],$$  is  rational iff
			there exist elements $c_1,...,c_p\in K$ such that 	$$a_{n}+\sum_{i=1}^{p}c_{i}\sigma^{i}(a_{
				n-i})=0,$$
			for all  $n\geq n_0$. 
		\end{theorem}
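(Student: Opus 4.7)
The plan is to prove both directions separately. The $(\Leftarrow)$ direction is a direct coefficient computation, while the $(\Rightarrow)$ direction requires a reduction to the case where the denominator in $f = A^{-1}B$ has nonzero constant term.

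For $(\Leftarrow)$, I set $P = 1 + c_1 T + \cdots + c_p T^p \in K[T;\sigma]$. By the twisted product formula from the previous subsection, the coefficient of $T^n$ in $Pf$ equals $a_n + \sum_{i=1}^p c_i \sigma^i(a_{n-i})$ for all $n \geq \max(n_0,p)$, which is zero by hypothesis. Hence $Q := Pf$ is a polynomial, and since $P(0) = 1$ the element $P$ is invertible in $K[[T;\sigma]]$, so $f = P^{-1}Q \in K(T;\sigma)$.

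For $(\Rightarrow)$, write $f = A^{-1}B$ with $A,B \in K[T;\sigma]$, $A \neq 0$. Comparing coefficients of $T^n$ in the identity $Af = B$ gives $\sum_i A_i \sigma^i(a_{n-i}) = 0$ for all $n > \deg B$. When $A(0) \neq 0$, dividing by $A(0)$ and taking $c_i := A(0)^{-1}A_i$ yields the recurrence. The main obstacle is the case $A(0) = 0$. To handle it, I factor $A = A'T^k$ where $k \geq 1$ is maximal so that $A'(0) \neq 0$. Using the commutation $T^k f = \sigma^k(f) T^k$, where $\sigma^k(f)$ denotes coefficient-wise application, the identity becomes $A'\sigma^k(f)T^k = B$; since the left-hand side vanishes in degrees below $k$, so does $B$, and writing $B = B'T^k$ and cancelling $T^k$ in the integral domain $K[[T;\sigma]]$ yields $A'\sigma^k(f) = B'$ with $A'(0) \neq 0$.

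To convert this into a recurrence for $f$ itself, I would note that the coefficient-wise action of $\sigma$ is a ring endomorphism of $K[[T;\sigma]]$ (a direct check from the product formula); moreover, since $\sigma$ must be surjective for $K((T;\sigma))$ to be a skew field, this action is an automorphism with inverse given by the coefficient-wise action of $\sigma^{-1}$. Setting $\tilde{A} := \sigma^{-k}(A')$, one has $\tilde{A}(0) = \sigma^{-k}(A'(0)) \neq 0$, and applying $\sigma^{-k}$ coefficient-wise to $A'\sigma^k(f) = B'$ gives $\tilde{A}f = \sigma^{-k}(B') \in K[T;\sigma]$. The coefficient computation of the easy direction, applied now to $\tilde{A}f$, then delivers the required recurrence $a_n + \sum_{i=1}^p c_i \sigma^i(a_{n-i}) = 0$. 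The two steps requiring most care are the right-cancellation of $T^k$ in $K[[T;\sigma]]$ and the use of surjectivity of $\sigma$ when defining $\tilde{A}$.
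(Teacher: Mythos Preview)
Your $(\Leftarrow)$ direction is correct and matches the paper's. For $(\Rightarrow)$, the case $A(0)\neq 0$ is also fine; the problem is the reduction when $A(0)=0$.

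That step requires $\sigma^{-1}$ in order to form $\tilde A=\sigma^{-k}(A')$, and you justify its existence by asserting that $K((T;\sigma))$ can only be a skew field when $\sigma$ is surjective. But the paper's standing hypothesis is that $\sigma$ is an arbitrary endomorphism of $K$ (the automorphism assumption is imposed only from Section~6 onward), and its proof of this theorem is written in that generality. The notion of rationality used here is membership in $K(T;\sigma)$, the left Ore field of fractions of $K[T;\sigma]$, which exists for any injective $\sigma$; whatever one makes of the Laurent-series description when $\sigma$ is not onto, you are not entitled to $\sigma^{-1}$, so your argument does not cover the non-surjective case.

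Concretely, your factorisation yields the relation $\sum_{i=0}^{p}A'_i\,\sigma^{k+i}(a_{n-i})=0$ for $n\ge n_0$ with $A'_0\neq 0$, which is exactly what the paper obtains directly from $Pf\in K[T;\sigma]$ with $P=b_0T^k+\cdots+b_pT^{p+k}$. To strip off the surplus $\sigma^k$ using only injectivity of $\sigma$, the paper views this as a $K$-linear dependence among the vectors $v_i=(\sigma^{k+i}(a_{n-i}))_{n\ge n_0}$, reduces (by taking $p$ minimal) to the case where $v_1,\dots,v_p$ are independent, and sets $w_i=(\sigma^{i}(a_{n-i}))_{n\ge n_0}$, so that $v_i=\sigma^k(w_i)$ entrywise. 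Since determinants commute with the entrywise action of $\sigma^k$ and $\sigma$ is injective, the vanishing of every $(p{+}1)\times(p{+}1)$ minor of the $v$-matrix forces the same for the $w$-matrix; hence $w_0,\dots,w_p$ are linearly dependent while $w_1,\dots,w_p$ remain independent, giving $w_0+\sum_{i}c_iw_i=0$, which is the desired recurrence. Replacing your appeal to $\sigma^{-1}$ by this determinantal step would repair the proof.
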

	\begin{proof}
		The proof of the "if" part is easy since 
		$$(1+\sum_{i=1}^{p}c_iT^{i})f\in K[T;\sigma].$$
		To prove the other direction, let $f=\sum_{i=0}^\infty a_iT^i$ be rational. Then, there exists a twisted polynomial $$P=b_0T^k+\cdots+b_{p}T^{p+k},$$
		where $p,k \geq 0$ and $b_0,...,b_{p}\in K$, such that
		$$Pf\in K[T;\sigma].$$
		Since  
		$$Pf=(b_0T^k+\cdots+b_{p}T^{p+k})(\sum_{i=0}^\infty a_iT^i)=\sum_{n\geq 0} \left( \sum_{i+j=n}b_i\sigma^{k+i}(a_{j})\right) T^{k+n},$$
		we obtain 
		$$\sum_{i=0}^{p}b_i\sigma^{k+i}(a_{n-i})=0,$$
		for all $n\geq n_0\geq p$. Therefore, the vectors 
		$$v_i=(\sigma^{k+i}(a_{n_0-i}),\sigma^{k+i}(a_{n_0+1-i}),...,\sigma^{k+i}(a_{n-i}),... ),$$
		where $i=0,...,p$, are linearly dependent over $K$. Without loss of generality, we may assume that $v_1,...,v_{p}$ are linearly independent over $K$.
		Setting
		 $$w_i=(\sigma^{i}(a_{n_0-i}),\sigma^{i}(a_{n_0+1-i}),...,\sigma^{i}(a_{n-i}),... ),$$
		we have $v_i=\sigma^k(w_i)$, for $i=0,...,p$. Since $\sigma$ is a field homomorphism, it follows that $w_1,...,w_{p}$ are linearly independent, and $w_0,...,w_{p}$ are linearly dependent over $K$. Therefore, there exist elements $c_1,...,c_{p}\in K$ such that $w_0+c_1w_1+\cdots+c_{p}w_{p}=0$, or equivalently,
		$$a_{n}+\sum_{i=1}^{p}c_i\sigma^{i}(a_{n-i})=0,$$
		for all $n\geq n_0$, finishing the proof.  
	\end{proof}
	The proof of the theorem gives us the following result:
	\begin{corollary}\label{denominator1}
		Any rational twisted series $f\in K[[T;\sigma]]$ can be written as $f=P^{-1}Q$
		for some $P,Q\in K[T;\sigma]$ such that $P(0)=1$. 
	\end{corollary}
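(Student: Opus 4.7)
The plan is to read the denominator off directly from the recurrence supplied by Theorem~\ref{(T)rationalitycriterion1}. Given a rational $f=\sum_{i=0}^\infty a_iT^i$, that theorem provides an integer $n_0\geq 0$ and constants $c_1,\ldots,c_p\in K$ with
$$a_{n}+\sum_{i=1}^{p}c_{i}\sigma^{i}(a_{n-i})=0\qquad\text{for all } n\geq n_0.$$
The first step is to form the candidate denominator
$$P=1+\sum_{i=1}^{p}c_{i}T^{i}\in K[T;\sigma],$$
for which $P(0)=1$ holds by construction.

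The second step is to verify $Pf\in K[T;\sigma]$. Using the multiplication rule $T^{i}\cdot a_{n-i}T^{n-i}=\sigma^{i}(a_{n-i})T^{n}$, the coefficient of $T^{n}$ in $Pf$ is precisely $a_n+\sum_{i=1}^{p}c_{i}\sigma^{i}(a_{n-i})$. By the recurrence this vanishes for every $n\geq n_0$, so $Pf$ has only finitely many nonzero coefficients. Setting $Q:=Pf\in K[T;\sigma]$ and noting that $P$ is nonzero in $K[T;\sigma]$ (indeed invertible in $K[[T;\sigma]]$, since $P(0)=1$), one obtains $f=P^{-1}Q$, as required.

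There is no real obstacle here, because the normalization $P(0)=1$ is already baked into the shape of the recurrence in Theorem~\ref{(T)rationalitycriterion1}: the coefficient of $a_n$ there is $1$ rather than an arbitrary nonzero scalar, which is exactly what forces the constant term of $P$ to be $1$. The only computation to check carefully is the identification of the $T^n$-coefficient of $Pf$ with the left-hand side of the recurrence, which is the same calculation used in the ``if'' direction of the theorem and is essentially a direct application of the commutation relation $Ta=\sigma(a)T$.
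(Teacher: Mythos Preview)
Your proof is correct and is essentially the same argument the paper has in mind: the paper simply remarks that ``the proof of the theorem gives us the following result,'' and your write-up spells out exactly that, forming $P=1+\sum_{i=1}^{p}c_iT^i$ from the recurrence of Theorem~\ref{(T)rationalitycriterion1} and checking $Pf\in K[T;\sigma]$ via the same coefficient computation used in the ``if'' direction of that theorem.
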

	
	\end{subsection}
\begin{subsection}{Kronecker's lemma for twisted series} \label{Kroneckerlemma}	
	In this part, we prove a criterion for the rationality of a twisted  series which is similar to a classical result for ordinary power series known as Kronecker's lemma, see \cite{Salem_1983}.

	\begin{lemma}
		A twisted series $$f=\sum_{i=0}^\infty a_iT^i\in K[[T;\sigma]],$$  is  rational	iff  
		$$\begin{vmatrix}
			\sigma^m(a_0)&\sigma^{m-1}(a_1)&\cdots&\sigma(a_{m-1})&a_m\\
			\sigma^m(a_1)&\sigma^{m-1}(a_2)&\cdots&\sigma(a_{m})&a_{m+1}\\
			\vdots&\vdots&\cdots&\vdots&\vdots\\
			\sigma^m(a_m)&\sigma^{m-1}(a_{m+1})&\cdots&\sigma(a_{2m-1})&a_{2m}
		\end{vmatrix}=0
		$$ for all $m>m_0$.
	\end{lemma}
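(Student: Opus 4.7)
The plan is to use the rationality criterion of Theorem \ref{(T)rationalitycriterion1} at both ends: translating a linear recurrence into a column dependence in $M_m$ and, conversely, extracting a recurrence from the vanishing of determinants.

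For the ``if'' direction, suppose $f$ is rational and let $c_1,\dots,c_p$ and $n_0$ be the coefficients and index supplied by Theorem \ref{(T)rationalitycriterion1}. For any $m>\max(n_0,p)$ I would inspect the linear combination of the last $p+1$ columns of $M_m$ with coefficients $(c_p,c_{p-1},\dots,c_1,1)$: its row-$i$ entry is $a_{i+m}+\sum_{k=1}^{p}c_k\sigma^{k}(a_{i+m-k})$, which vanishes because $i+m\geq n_0$. Hence those columns are linearly dependent and $\det M_m=0$.

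For the ``only if'' direction, suppose $\det M_m=0$ for $m>m_0$. If in fact $\det M_m=0$ for every $m\geq 0$, I would prove $a_m=0$ for all $m$ by induction on $m$: assuming $a_0=\cdots=a_{m-1}=0$, the matrix $M_m$ has nonzero entries only where $i+j\geq m$, so the only permutation contributing to $\det M_m$ is the reverse one, giving $\det M_m=\pm\prod_{i=0}^{m}\sigma^i(a_m)$. Injectivity of $\sigma$ and the field axioms then force $a_m=0$, so $f=0$ is trivially rational. Otherwise let $p$ be the largest index with $\det M_p\neq 0$. The top-left $(p+1)\times(p+1)$ submatrix of $M_{p+1}$ equals $\sigma(M_p)$ entrywise, hence has nonzero determinant $\sigma(\det M_p)$; combined with $\det M_{p+1}=0$, this writes the last column of $M_{p+1}$ uniquely as a combination of the first $p+1$, and reading off the coefficients yields $c_1,\dots,c_{p+1}\in K$ with $\epsilon_n:=a_n+\sum_{k=1}^{p+1}c_k\sigma^k(a_{n-k})=0$ for $n=p+1,\dots,2p+2$.

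The step I expect to be the main obstacle is extending this recurrence to all $n\geq p+1$. My plan is to induct on $N$: assuming $\epsilon_n=0$ for $p+1\leq n\leq N$ (with $N\geq 2p+2$), I apply to $M_{N-p}$ the column operations that replace $C_j$ by $C_j+\sum_{k=1}^{p+1}c_k C_{j-k}$ for each $j=p+1,\dots,N-p$, where the $C_{j-k}$ denote the original columns; since this amounts to right-multiplication by a unit upper-triangular matrix, the determinant is preserved. A short computation shows the new entry in row $i$ of the modified column $j$ equals $\sigma^{(N-p)-j}(\epsilon_{i+j})$, which vanishes whenever $i+j\leq N$ by the induction hypothesis. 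The resulting matrix has block form $\begin{pmatrix}A & 0\\ B & C\end{pmatrix}$ with $A=\sigma^{N-2p}(M_p)$ (determinant nonzero) and $C$ a lower anti-triangular $(N-2p)\times(N-2p)$ block whose anti-diagonal consists of $\sigma$-iterates of $\epsilon_{N+1}$. Hence $\det M_{N-p}=\pm\sigma^{N-2p}(\det M_p)\cdot\prod_{i=0}^{N-2p-1}\sigma^i(\epsilon_{N+1})$; since this is $0$ but the first factor is nonzero, injectivity of $\sigma$ forces $\epsilon_{N+1}=0$. Rationality of $f$ then follows from Theorem \ref{(T)rationalitycriterion1}.
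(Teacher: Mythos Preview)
Your overall strategy is exactly the one the paper uses: reduce both directions to Theorem~\ref{(T)rationalitycriterion1}, handle the all-zero case by showing $f=0$, pick the last index with nonvanishing determinant, read off the recurrence coefficients from a column dependence, and then propagate the recurrence by a column-operation argument applied to the larger matrices. The block decomposition you describe (with top-left block $\sigma^{N-2p}(M_p)$ and a lower anti-triangular block whose anti-diagonal carries $\sigma$-iterates of the next $\epsilon$) is precisely what appears in the paper's computation of $D_mA$.

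There is, however, a genuine slip in the induction step. You add $c_k$ times column $C_{j-k}$ to $C_j$, with the \emph{untwisted} scalars $c_k$. The $(i,j)$-entry then becomes
\[
\sigma^{(N-p)-j}(a_{i+j})+\sum_{k=1}^{p+1}c_k\,\sigma^{(N-p)-j+k}(a_{i+j-k}),
\]
whereas
\[
\sigma^{(N-p)-j}(\epsilon_{i+j})=\sigma^{(N-p)-j}(a_{i+j})+\sum_{k=1}^{p+1}\sigma^{(N-p)-j}(c_k)\,\sigma^{(N-p)-j+k}(a_{i+j-k}).
\]
These agree only when $\sigma^{(N-p)-j}$ fixes each $c_k$, which you cannot assume. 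The fix is to use the coefficients $\sigma^{(N-p)-j}(c_k)$ in the column operation on column $j$ (this is exactly what the paper does via its matrix $A$ with entries $\sigma^{m+1-j}(c_{j-i})$). The resulting transformation is still right-multiplication by a unit upper-triangular matrix, so the determinant is preserved, and with this correction your block computation and the conclusion $\epsilon_{N+1}=0$ go through verbatim. Also note that your ``if'' and ``only if'' labels are swapped relative to the statement, though the content under each is correct.
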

	\begin{proof}
		Set
		$$D_m=\begin{pmatrix}
			\sigma^m(a_0)&\sigma^{m-1}(a_1)&\cdots&\sigma(a_{m-1})&a_m\\
			\sigma^m(a_1)&\sigma^{m-1}(a_2)&\cdots&\sigma(a_{m})&a_{m+1}\\
			\vdots&\vdots&\cdots&\vdots&\vdots\\
			\sigma^m(a_m)&\sigma^{m-1}(a_{m+1})&\cdots&\sigma(a_{2m-1})&a_{2m}\end{pmatrix},$$
		where $m\geq 0$. 
		First suppose that $f$ is rational. By Theorem \ref{(T)rationalitycriterion1}, there are $c_1,...,c_p\in K$ such that 	$$a_{n}+\sum_{i=1}^{p}c_{i}\sigma^{i}(a_{
			n-i})=0,$$
		for all $n\geq n_0$. It follows that the determinant of $D_m$ is zero for all  $m\geq n_0$.
		
		Conversely, suppose that $|D_m|=0$ for all  $m>n_0$. If $\left| D_m\right| =0$ for all $m=0,1,2,...$, then it is easy to see that we must have $f=0$, and we are done. Otherwise, we can find $p\in\mathbb{N}$ such that $|D_{p-1}|\neq 0$ and $|D_m|=0$ for all $m\geq p$. Since $\left| D_{p}\right|=0$ and $\left| D_{p-1}\right|\neq 0$, we can find 	$c_1,...,c_{p}\in K$ such that
		$$c_p\sigma^p(a_j)+c_{p-1}\sigma^{p-1}(a_{j+1})+\cdots+c_{1}\sigma(a_{j+p-1})+a_{j+p}=0,$$
		where $0\leq j\leq p$. We set
		$$L_{j}=a_{j+p}+ \sum_{k=1}^pc_{k}\sigma^{k}(a_{j+p-k}),$$
		and use induction to prove that $L_{j}=0$ for all $j\geq 0$. Assume that $m>p$ and $L_{j}=0$ when $j<m$. We note that 
		$$\sigma^r(L_{j})=\sigma^r(a_{j+p})+ \sum_{k=1}^p\sigma^r(c_{k})\sigma^{k+r}(a_{j+p-k}),$$
		for all $r\geq 0$, since $\sigma$ is a field homomorphism. Now, let $A=(c_{ij})$ be the following $(m+1)\times (m+1)$ matrix 
		$$c_{ij}=
		\begin{cases}\
			\sigma^{m+1-j}(c_{j-i}) & j> p\text{ and } j-p\leq i\leq j-1,\\
			1 & i=j,\\
			0 & \text{otherwise.}\\
		\end{cases}
		$$
		Then the $ij$-th entry of $D_mA$ is equal to $\sigma^{m-j+1}(a_{i+j-1})$ if $j\leq p$, and equal to
		$$
		\sigma^{m-j+1}(a_{i+j-1})+\sum_{k=j-p}^{j-1} \sigma^{m-k+1}(a_{i+k-1})\sigma^{m+1-j}(c_{j-k})=
		$$
		\begin{align*}
			\sigma^{m-j+1}\left( a_{i+j-1}+\sum_{k=j-p}^{j-1}c_{j-k} \sigma^{j-k}(a_{i+k-1})\right) &=\sigma^{m-j+1}\left( a_{i+j-1}+\sum_{k=1}^{p}c_{k} \sigma^{k}(a_{i+j-1-k})\right)\\
			&= \sigma^{m-j+1}\left( L_{i+j-p-1}\right),
		\end{align*}				
		if $j> p$. Since, $|A|=1$, and $L_{j}=0$ for $j<m$, we see that 
		$$|D_m|=(-1)^{m-p}|D_{p-1}| \prod_{j=p+1}^{m}\sigma^{m-j+1}\left( L_{m}\right).$$
		It follows that $L_m=0$ since $|D_m|=0$, $|D_{p-1}|\neq 0$, and $\sigma$ is injective. The proof of the induction step is finishes. Since $L_j=0$ for all $j\geq 0$, by Lemma \ref{(T)rationalitycriterion1}, $f$ must be rational, and we are done.  
		
	\end{proof}
	
\end{subsection}

\end{section} 
\begin{section}{$\sigma$-Recognizable twisted series} 

A twisted series $f=\sum_{n\geq 0}a_nT^n\in K[[T;\sigma]]$ is called $\sigma$\textit{-recognizable} if there exist a matrix $A\in K^{m\times m}$, a row vector $X\in K^{1\times m}$ and a column vector $Y\in K^{m\times 1}$ such that 
\begin{equation}\label{a_n}
 a_n=X\left( A\sigma(A)\cdots\sigma^{n-1}(A)\right) \sigma^{n} (Y),
\end{equation}
for all $n\geq 0$ (note that for $n=0$, the formula reads $a_0=XY$). The triple $(X,A,Y)$ is called a $\sigma$-\textit{linear representation} of $f$, and the number $m$ is called the \textit{dimension} of $(X,A,Y)$. We note that Equation \ref{a_n} can be written as
$$f=X(I_m-AT)^{-1}Y,$$
where $$(I_m-AT)^{-1}=\sum_{n\geq 0}(A T)^n=\sum_{n\geq 0}A\sigma(A)\cdots\sigma^{n-1}(A) T^n\in K[[T;\sigma]]^{m\times m}.$$

In order to study $\sigma$-recognizable twisted series, we use the shift map  $s:K[[T;\sigma]]\to K[[T;\sigma]]$ which is defined as follows
	$$s(\sum_{n\geq 0}a_nT^n)=\sum_{n\geq 0}a_{n+1}T^n.$$
	The proof of the following lemma is left  to the reader. 
	\begin{lemma}\label{(L)Propertiesofs}
	(1) $s(f+g)=s(f)+s(g)$ for all $f,g\in K[[T;\sigma]]$.\\
	(2) $s(af)=as(f)$  for all $f\in K[[T;\sigma]]$ and $a\in K$.\\
	(3) $s(fa)=s(f)\sigma(a)$  for all $f\in K[[T;\sigma]]$ and $a\in K$.\\
	(4) If $f=\sum_{n\geq 0}a_nT^n\in K[[T;\sigma]]$, then $a_n=s^n(f)(0)$ for all $n\geq 0$. 	
	\end{lemma}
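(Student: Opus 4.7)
The plan is to verify each of the four assertions directly from the definitions, expanding $f=\sum_{n\ge 0}a_nT^n$ and $g=\sum_{n\ge 0}b_nT^n$ coefficient-wise. Properties (1) and (2) are purely formal: since $f+g=\sum_{n\ge 0}(a_n+b_n)T^n$ and $af=\sum_{n\ge 0}(aa_n)T^n$ (the scalar $a\in K$ commutes past nothing, so no twist appears), one reads off from the definition of $s$ that $s(f+g)=\sum_{n\ge 0}(a_{n+1}+b_{n+1})T^n=s(f)+s(g)$ and $s(af)=\sum_{n\ge 0}(aa_{n+1})T^n=as(f)$. These take one line each.

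For (3), the point is that right multiplication by $a$ in $K[[T;\sigma]]$ involves the twist. Using $Ta=\sigma(a)T$ iteratively one obtains $T^na=\sigma^n(a)T^n$, hence
$$fa=\Bigl(\sum_{n\ge 0}a_nT^n\Bigr)a=\sum_{n\ge 0}a_n\sigma^n(a)T^n.$$
Applying $s$ yields $s(fa)=\sum_{n\ge 0}a_{n+1}\sigma^{n+1}(a)T^n$. On the other hand, right multiplication of $s(f)=\sum_{n\ge 0}a_{n+1}T^n$ by $\sigma(a)$ gives, by the same twisted commutation rule, $s(f)\sigma(a)=\sum_{n\ge 0}a_{n+1}\sigma^n(\sigma(a))T^n=\sum_{n\ge 0}a_{n+1}\sigma^{n+1}(a)T^n$. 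The two expressions coincide, proving (3). This is the only step where anything non-obvious happens; the potential pitfall is forgetting that shifting by one position in the coefficient sequence produces an extra application of $\sigma$ on the right-multiplier, and I would double-check by computing the cases $n=0,1$ explicitly.

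For (4), I would proceed by induction on $n$. The base case $n=0$ is the definition: $s^0(f)(0)=f(0)=a_0$. For the inductive step I would first establish the stronger statement that $s^n(f)=\sum_{k\ge 0}a_{n+k}T^k$, again by induction: assuming this equality for $n$, the definition of $s$ gives $s^{n+1}(f)=s(s^n(f))=\sum_{k\ge 0}a_{n+1+k}T^k$. Reading the constant term of this series yields $s^{n+1}(f)(0)=a_{n+1}$, completing the induction.

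Overall the lemma is essentially bookkeeping, and the only substantive input is the twisted commutation relation $Ta=\sigma(a)T$, which is exactly what forces the $\sigma$ in assertion (3) and distinguishes the twisted shift from the ordinary one.
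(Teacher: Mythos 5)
Your proof is correct, and since the paper leaves this lemma's verification to the reader, your direct coefficient-wise computation (with the twisted commutation $T^n a=\sigma^n(a)T^n$ doing the work in part (3)) is exactly the intended argument. Nothing to add.
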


	\begin{subsection}{Stability}\label{(S)Stability} 	
	A subset $P$ of $K[[T;\sigma]]$ is called \textit{stable} if $s(P)\subset P$. 
	\begin{proposition}\label{(P)RightStableRecog}
		A twisted series is $\sigma$-recognizable iff it belongs to a finite-dimensional stable  right vector subspace of $K[[T;\sigma]]$.  More precisely,  the following hold:\\
		(i) Let $f_1,...,f_m$ be a basis of 
		a stable  right vector subspace $V$ of $K[[T;\sigma]]$ containing $f$. Setting $F=\begin{pmatrix}
			f_1&
			f_2&
			\cdots&
			f_m
		\end{pmatrix} \in K[[T;\sigma]]^{1\times m},$  we have the following $\sigma$-linear representation of $f$
	$$(F(0),A,Y),$$
	where $Y\in K^{m\times 1}$ and $A\in K^{m\times m}$ satisfy $f=FY$ and $s(F)=FA$. \\
	(ii) Let $(X,A,Y)$ be a $\sigma$-linear representation of $f$. The right vector subspace of  $K[[T;\sigma]]$ generated by the components of the vector $$X(I_m-AT)^{-1}\in K[[T;\sigma]]^{1\times m},$$ is a finite-dimensional stable right vector subspace of $K[[T;\sigma]]$ containing $f$.
	\end{proposition}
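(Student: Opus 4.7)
The plan is to prove the equivalence by establishing the two explicit constructions (i) and (ii). The direction ``$f$ lies in a finite-dimensional stable right subspace $\Rightarrow$ $f$ is $\sigma$-recognizable'' is handled by (i), while the converse is handled by (ii); in both parts the only tools needed are Lemma \ref{(L)Propertiesofs} and the commutation rule $T^n A = \sigma^n(A) T^n$ extended from scalars to matrices with entries in $K$.

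For (i), I would first use stability of $V$ to write $s(f_j) = \sum_i f_i a_{ij}$ uniquely with $a_{ij}\in K$; packaging $A = (a_{ij})$ this reads $s(F) = FA$. Since $f \in V$, we also have $f = FY$ for a unique $Y \in K^{m\times 1}$. The key step is then to iterate the shift using parts (1)--(3) of Lemma \ref{(L)Propertiesofs}: entrywise one obtains $s(FY) = s(F)\sigma(Y) = FA\sigma(Y)$, and by an easy induction
$$s^n(f) = F \cdot A\sigma(A)\cdots\sigma^{n-1}(A) \cdot \sigma^n(Y).$$
Evaluating at $T=0$ and invoking part (4) of the lemma yields $a_n = F(0) \cdot A\sigma(A)\cdots\sigma^{n-1}(A) \cdot \sigma^n(Y)$, which is precisely the $\sigma$-linear representation $(F(0), A, Y)$.

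For (ii), set $G = X(I_m - AT)^{-1} \in K[[T;\sigma]]^{1\times m}$ and let $V$ be the right subspace spanned by its $m$ components. From $f = GY$ we get $f \in V$, and $\dim V \leq m$. The only nontrivial point is stability. Expanding the geometric series gives $G = \sum_{n\geq 0} G_n T^n$ with $G_n = XA\sigma(A)\cdots\sigma^{n-1}(A)$, so $s(G)_n = G_{n+1} = G_n \sigma^n(A)$. On the other hand, the identity $T^n A = \sigma^n(A) T^n$ implies $(GA)_n = G_n \sigma^n(A)$, so $s(G) = GA$. Reading this coordinatewise exhibits each $s(G_j)$ as a right $K$-linear combination of the components of $G$, so $V$ is stable. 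No step presents a serious obstacle; the only mild care needed throughout is keeping track of the asymmetry in part (3) of Lemma \ref{(L)Propertiesofs}, which is what forces the $\sigma$-twists in the products $A\sigma(A)\cdots\sigma^{n-1}(A)$ on both sides of the argument.
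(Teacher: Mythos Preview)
Your proposal is correct and follows essentially the same argument as the paper: in (i) you iterate $s$ via Lemma \ref{(L)Propertiesofs}(3) to obtain $s^n(f)=F\,A\sigma(A)\cdots\sigma^{n-1}(A)\,\sigma^n(Y)$ and then evaluate at $0$, and in (ii) you verify $s(G)=GA$ by comparing coefficients, which is the same computation the paper carries out as a chain of equalities on the series $\sum_{n\ge0}(XA\sigma(A)\cdots\sigma^{n-1}(A))T^n$.
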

	\begin{proof}
		(i) 
		We can find a vector $Y\in K^{m\times 1}$ such that
		$f=FY$ since $f\in V$. The fact that $V$ is stable, implies that there is a matrix $A\in K^{m\times m}$ such that  $s(F)=FA$. Using Part (3) of  Lemma \ref{(L)Propertiesofs} and induction on $n$, one can show that 
		$$s^{n}(f)=\left( FA\sigma(A)\cdots\sigma^{n-1}(A)\right) \sigma^{n}( Y),$$
		for all $n\geq 0$. It follows that
		$$a_n=s^{n}(f)(0)=F(0)\left( A\sigma(A)\cdots\sigma^{n-1}(A)\right) \sigma^{n} (Y) ,$$
		for all $n\geq 0$, proving that $(F(0),A,Y)$ is a $\sigma$-linear representation of $f$.
		\newline
		\newline
		(ii) Clearly, $V$ is finite-dimensional, and $f\in V$ because $f=X(I_m-AT)^{-1}Y$. We have
		\begin{align*}
			s\left( X(I_m-AT)^{-1} \right) &=s\left( \sum_{n\geq 0}  \left( X A\sigma(A)\cdots\sigma^{n-1}(A)\right)T^n \right) \\
			&=\sum_{n\geq 0}  \left( X A\sigma(A)\cdots\sigma^{n}(A)\right) T^n\\
			&=\sum_{n\geq 0}  \left( X A\sigma(A)\cdots\sigma^{n-1}(A)\right) T^nA=FA,
		\end{align*}
		showing that $V$ is stable, and we are done. 
	\end{proof}
	We note that if $(X,A,Y)$ is a $\sigma$-linear representation of $f\in  K[T;\sigma]$, then $(aX,A,Y)$ is a $\sigma$-linear representation of $af$, and $(X,A,aY)$ is a $\sigma$-linear representation of $fa$ for all $a\in K$. Since the sum of stable sets is stable, we obtain the following result:
	\begin{corollary}\label{(C)Sumofrec}
		Any (left or right) $K$-linear combination of  $\sigma$-recognizable twisted series is  $\sigma$-recognizable. 
	\end{corollary}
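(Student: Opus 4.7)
The plan is to derive the corollary as an immediate consequence of Proposition~\ref{(P)RightStableRecog} combined with the explicit representations $(aX,A,Y)$ for $af$ and $(X,A,aY)$ for $fa$ recorded just before the statement. In short: closure under scalar multiplication is witnessed by a direct modification of a given $\sigma$-linear representation, while closure under sum follows abstractly from the stability criterion.

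First I would handle scalar multiples. The two representations mentioned above can be verified directly from formula~(\ref{a_n}). The left scalar factors out cleanly: $aX(A\sigma(A)\cdots\sigma^{n-1}(A))\sigma^n(Y)=a\,a_n$, the $n$-th coefficient of $af$. For the right scalar, note that since $T^na=\sigma^n(a)T^n$, the $n$-th coefficient of $fa$ is $a_n\sigma^n(a)$; on the other hand, $X(A\sigma(A)\cdots\sigma^{n-1}(A))\sigma^n(aY)=X(A\sigma(A)\cdots\sigma^{n-1}(A))\sigma^n(a)\sigma^n(Y)$, and commutativity of $K$ lets the scalar $\sigma^n(a)$ be moved to the right, giving $a_n\sigma^n(a)$. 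Hence the class of $\sigma$-recognizable series is closed under both left and right scalar multiplication.

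For sums I would use the subspace criterion abstractly. Given $\sigma$-recognizable $f,g$, Proposition~\ref{(P)RightStableRecog} supplies finite-dimensional stable right subspaces $V_f,V_g\subset K[[T;\sigma]]$ with $f\in V_f$ and $g\in V_g$. Their sum $V_f+V_g$ is finite-dimensional and contains $f+g$, and it is stable because $s$ is $K$-linear by Lemma~\ref{(L)Propertiesofs}(1), so $s(v_f+v_g)=s(v_f)+s(v_g)\in V_f+V_g$. A second application of Proposition~\ref{(P)RightStableRecog} then yields that $f+g$ is $\sigma$-recognizable.

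There is essentially no obstacle here: the corollary is an assembly of Proposition~\ref{(P)RightStableRecog}, the remark recorded immediately before the statement, and the trivial fact that a sum of stable subspaces is stable. A more constructive alternative would be to produce a block $\sigma$-linear representation for $f+g$ with matrix $\mathrm{diag}(A_1,A_2)$, row $(X_1\ X_2)$, and the stacked column formed from $Y_1$ and $Y_2$, and verify (\ref{a_n}) directly; this gives the same dimension bound $\dim V_f+\dim V_g$ but adds no new content over the stability argument.
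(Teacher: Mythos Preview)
Your proposal is correct and follows essentially the same route as the paper: scalar multiples are handled by the explicit modifications $(aX,A,Y)$ and $(X,A,aY)$ of a given $\sigma$-linear representation, and closure under sums follows from the observation that a sum of stable (finite-dimensional) right subspaces is again stable, combined with Proposition~\ref{(P)RightStableRecog}. Your write-up simply adds more detail to the verifications than the paper records.
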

	
	Clearly, the intersection of any family of stable right vector subspaces of  $ K[[T;\sigma]]$ is a stable right vector subspace of  $ K[[T;\sigma]]$. Therefore, we can define the concept of the smallest stable right vector subspace $V(S)$ of $ K[[T;\sigma]]$ generated by a set $S\subset K[[T;\sigma]]$. It is easy to see that the vector space 
	$$V(f)=\sum_{i\geq 0}s^i(f)K,$$ is the smallest stable right vector subspace of $ K[[T;\sigma]]$ which contains $f\in  K[[T;\sigma]]$. Therefore, $f$ is $\sigma$-recognizable iff $V(f)$ is finite-dimensional. Furthermore, we have the following result:
	\begin{proposition}\label{(P)smalleststable}
		A twisted series $f\in  K[T;\sigma]$ is $\sigma$-recognizable iff there is some $n>0$ such that $V(f)= \sum_{i=0}^{n-1}s^i(f)K$, or equivalently 
		$$s^n(f)\in \sum_{i=0}^{n-1}s^i(f)K.$$
	\end{proposition}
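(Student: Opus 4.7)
The plan is to deduce the main equivalence from the remark, already made in the paragraph preceding the proposition, that $f$ is $\sigma$-recognizable iff $V(f)$ is a finite-dimensional right $K$-vector space. This reduces the work to matching that dimensional finiteness with the two explicit conditions on $n$ given in the statement.

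I would first handle the ``equivalently'' clause. One direction is immediate, since $s^n(f) \in V(f) = \sum_{i=0}^{n-1} s^i(f) K$. For the converse, suppose $s^n(f) = \sum_{i=0}^{n-1} s^i(f) c_i$ with $c_i \in K$, write $W = \sum_{i=0}^{n-1} s^i(f) K$, and prove $s^m(f) \in W$ for every $m \ge n$ by induction on $m$. In the induction step, one takes an expression of $s^m(f)$ as a right combination of $f, s(f), \dots, s^{n-1}(f)$ with coefficients in $K$ and applies $s$, using Lemma~\ref{(L)Propertiesofs}(3) to move scalars through as $s(gc) = s(g)\sigma(c)$. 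This produces a right combination of $s(f), \dots, s^n(f)$ whose new coefficients lie in $\sigma(K) \subseteq K$; the term $s^n(f)$ that appears is then rewritten via the hypothesis to return inside $W$. This shows $V(f) \subseteq W$, and the reverse containment is obvious.

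For the main equivalence, one direction is easy: if such an $n$ exists, $V(f)$ is spanned by $n$ elements, hence finite-dimensional, hence $f$ is $\sigma$-recognizable. In the other direction, assume $V(f)$ has finite dimension $d$. Then the $d+1$ elements $f, s(f), \dots, s^d(f)$ are necessarily right $K$-linearly dependent, so I would let $n$ be the smallest positive integer for which $f, s(f), \dots, s^n(f)$ are right $K$-linearly dependent. In any nontrivial relation $\sum_{i=0}^n s^i(f) c_i = 0$ the coefficient $c_n$ must be nonzero, by minimality of $n$; solving yields $s^n(f) = -\sum_{i=0}^{n-1} s^i(f) c_i c_n^{-1}$, which is precisely the required condition.

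The only real subtlety, and the main place where care is needed, is the $\sigma$-semilinearity of $s$ appearing in the inductive step of the ``equivalently'' direction: propagating a single relation forward in $m$ forces $\sigma$ to be applied to the original coefficients. Because $\sigma(K) \subseteq K$, this keeps everything inside $W$, but the bookkeeping must be done with $s(gc) = s(g)\sigma(c)$ in place of ordinary right $K$-linearity.
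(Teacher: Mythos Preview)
Your proposal is correct and follows essentially the same route as the paper: both use Lemma~\ref{(L)Propertiesofs}(3) to propagate the single relation $s^n(f)\in\sum_{i<n}s^i(f)K$ inductively to all higher shifts, and both deduce the existence of such an $n$ from the finite-dimensionality of $V(f)$. The only cosmetic difference is that you locate $n$ via a minimal linear-dependence argument, whereas the paper simply picks a finite initial segment $f,s(f),\dots,s^{n-1}(f)$ that spans $V(f)$.
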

\begin{proof}
	If $s^n(f)\in \sum_{i=0}^{n-1}s^i(f)K$, then an application of Lemma \ref{(L)Propertiesofs} shows that   $$s^m(f)\in  \sum_{i=0}^{n-1}s^i(f)K,$$ for all $m\geq n$. Therefore, $V(f)$ is finite-dimensional, implying that $f$ is $\sigma$-recognizable, by Proposition \ref{(P)RightStableRecog}. Conversely, if $f$ is $\sigma$-recognizable, $V(f)$ must be finite-dimensional. Since $f,s(f),s^2(f),...$ generate this vector space, there exists $n>0$ such that $f,s(f),s^2(f),..., s^{n-1}(f)$ generate $V(f)$ as a right vector space. In particular, we have $s^n(f)\in \sum_{i=0}^{n-1}s^i(f)K$. 
\end{proof}
The condition $s^n(f)\in \sum_{i=0}^{n-1}s^i(f)K$ can be represented in terms of the coefficients of $f$. More precisely, let $f=\sum_{j\geq0}a_jT^j$. Then the condition $s^n(f)\in \sum_{i=0}^{n-1}s^i(f)K$ implies that there exist $c_0,...,c_{n-1}\in K$ such that
$$s^n(f)=\sum_{i=0}^{n-1}s^i(f) c_i.$$
Since $s^n(f)=\sum_{j\geq 0}a_{n+j}T^j$, we obtain
$$\sum_{j\geq 0}a_{n+j}T^j=\sum_{i=0}^{n-1}\left( \sum_{j\geq 0}a_{i+j}T^j\right)  c_i=\sum_{j\geq 0}\left(\sum_{i=0}^{n-1} a_{i+j}\sigma^j(c_i)\right) T^{j}. $$
Therefore, we see that
$$
a_{n+j}= \sum_{i=0}^{n-1} a_{i+j}\sigma^j(c_i),$$ for all $j\geq 0$, proving the following result: 
 \begin{corollary}\label{(C)recursiverecog}
 	A twisted series $f\in  K[T;\sigma]$ is $\sigma$-recognizable iff there exist $c_0,...,c_{n-1}\in K$ such  that $$
 	a_{n+j}= \sum_{i=0}^{n-1} a_{i+j}\sigma^j(c_i),$$ for all $j\geq 0$.
 \end{corollary}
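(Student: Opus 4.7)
My plan is to deduce Corollary \ref{(C)recursiverecog} directly from Proposition \ref{(P)smalleststable} by translating the operator condition $s^n(f) \in \sum_{i=0}^{n-1} s^i(f) K$ into an equivalent condition on the coefficient sequence $(a_j)_{j\ge 0}$. Proposition \ref{(P)smalleststable} already tells us that $f$ is $\sigma$-recognizable iff there exist $n > 0$ and scalars $c_0, \ldots, c_{n-1} \in K$ with $s^n(f) = \sum_{i=0}^{n-1} s^i(f)\, c_i$ in $K[[T;\sigma]]$, so the task reduces to reading off coefficients on both sides of this identity.

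The key computation is to expand the $T^j$ coefficient of each side. Writing $s^i(f) = \sum_{j\ge 0} a_{i+j} T^j$, I would invoke Lemma \ref{(L)Propertiesofs}(3) --- which encodes the commutation $T a = \sigma(a) T$ --- together with a short induction on $j$ to obtain
\begin{equation*}
s^i(f)\, c_i \;=\; \sum_{j\ge 0} a_{i+j}\, \sigma^j(c_i)\, T^j.
\end{equation*}
Then the $T^j$ coefficient of the left-hand side of the operator identity is $a_{n+j}$, while that of the right-hand side is $\sum_{i=0}^{n-1} a_{i+j}\, \sigma^j(c_i)$. A coefficient-by-coefficient comparison shows that the operator identity is equivalent to the recurrence
\begin{equation*}
a_{n+j} \;=\; \sum_{i=0}^{n-1} a_{i+j}\, \sigma^j(c_i) \qquad (j \ge 0)
\end{equation*}
appearing in the statement.

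Combining this equivalence with Proposition \ref{(P)smalleststable} yields both directions of the corollary at once: the existence of $c_0, \ldots, c_{n-1}$ satisfying the coefficient recurrence corresponds exactly to $s^n(f) \in \sum_{i=0}^{n-1} s^i(f) K$, which in turn is equivalent to $\sigma$-recognizability. The only subtlety to watch is the appearance of $\sigma^j$ applied to $c_i$, which comes from the twist in $K[[T;\sigma]]$ rather than from the sequence itself; once this is handled via Lemma \ref{(L)Propertiesofs}(3), the whole argument is a routine comparison of coefficients and I do not anticipate any genuine obstacle beyond this bookkeeping.
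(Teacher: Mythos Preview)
Your proposal is correct and follows essentially the same approach as the paper: both deduce the corollary from Proposition~\ref{(P)smalleststable} by expanding the identity $s^n(f) = \sum_{i=0}^{n-1} s^i(f)\,c_i$ coefficientwise, using the twist rule $T^j c_i = \sigma^j(c_i)\,T^j$ to obtain the recurrence $a_{n+j} = \sum_{i=0}^{n-1} a_{i+j}\,\sigma^j(c_i)$ for all $j\ge 0$. The only cosmetic difference is that the paper invokes the twist relation directly rather than via Lemma~\ref{(L)Propertiesofs}(3), but the substance of the argument is identical.
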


\end{subsection}
\begin{subsection}{The main theorem of this section} 	
In this section, we prove that every  $\sigma$-recognizable twisted series is rational, and vice versa.  We begin by some lemmas.
	\begin{lemma}\label{(L)sofproduct}
		Let $f,g\in K[[T;\sigma]]$ be arbitrary twisted series. Then, $$s(fg)=fs(g)+s(f)\sigma(g(0)).$$
In particular, if $f(0)\neq 0$, then
		$$s(f^{-1})=-f^{-1}s(f)\sigma(f^{-1}(0)).$$		
	\end{lemma}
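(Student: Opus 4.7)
The plan is a direct calculation in the ring $K[[T;\sigma]]$, tracking how the shift $s$ interacts with the twisted multiplication. Write $f = \sum_{i\geq 0} a_i T^i$ and $g = \sum_{j\geq 0} b_j T^j$. By the twisted convolution formula, $fg = \sum_{n\geq 0}\bigl(\sum_{i+j=n} a_i \sigma^i(b_j)\bigr) T^n$, so applying $s$ coefficient-by-coefficient yields
$$
s(fg) = \sum_{n\geq 0}\Bigl(\sum_{i+j=n+1} a_i \sigma^i(b_j)\Bigr) T^n.
$$

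I would then split the inner sum over $i+j=n+1$ into the two cases $j\geq 1$ and $j=0$. The $j=0$ term contributes $a_{n+1}\sigma^{n+1}(b_0)\,T^n$, while reindexing $j\mapsto j+1$ in the remaining piece recognizes $\sum_{i+j=n} a_i \sigma^i(b_{j+1})$ as the coefficient of $T^n$ in $f\cdot s(g)$, since $s(g)=\sum_{j\geq 0}b_{j+1}T^j$. To absorb the residual series, observe that the relation $T^n c = \sigma^n(c) T^n$ (equivalently, Part (3) of Lemma~\ref{(L)Propertiesofs}) gives
$$
s(f)\,\sigma(b_0) = \Bigl(\sum_{n\geq 0} a_{n+1} T^n\Bigr)\sigma(b_0) = \sum_{n\geq 0} a_{n+1}\sigma^{n+1}(b_0)\, T^n.
$$
Since $b_0 = g(0)$, combining the two pieces yields the identity $s(fg) = f\,s(g) + s(f)\,\sigma(g(0))$.

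For the "in particular" clause, suppose $f(0)\neq 0$ so that $f^{-1}\in K[[T;\sigma]]$ exists. Applying the main identity to $f\cdot f^{-1}=1$ and using $s(1)=0$ gives $0 = f\,s(f^{-1}) + s(f)\,\sigma(f^{-1}(0))$, and left-multiplying by $f^{-1}$ produces the claimed formula. The only real subtlety is the twist in the middle step: the single $\sigma$ appearing on $g(0)$ in the statement hides an extra $\sigma^n$ coming from commuting a scalar past $T^n$, and this is precisely what allows the residual sum to be absorbed into $s(f)\,\sigma(g(0))$; beyond that, the argument is routine bookkeeping with the twisted product.
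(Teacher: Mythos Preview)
Your proof is correct and follows essentially the same approach as the paper: both compute $s(fg)$ coefficientwise, separate off the $j=0$ term to recognize $s(f)\sigma(g(0))$, and identify the remainder as $f\,s(g)$; the derivation of $s(f^{-1})$ by applying the identity to $f\cdot f^{-1}=1$ with $s(1)=0$ is identical as well. The only cosmetic difference is that the paper subtracts $fs(g)$ from $s(fg)$ and simplifies, whereas you split the inner sum first, but this is the same computation.
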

\begin{proof}
	Given $f=\sum_{i\geq 0}a_iT^i, g=\sum_{i\geq 0}b_iT^i,$ we write 
	$$fg=\sum_{n\geq 0}\left( \sum_{i+j=n}a_i\sigma^i(b_j)\right) T^n
	\implies s(fg)=\sum_{n\geq 0}\left( \sum_{i+j=n+1}a_i\sigma^{i}(b_j)\right) T^n.
	$$
	We also have
	$$fs(g)=\left( \sum_{i\geq 0}a_{i}T^i\right) \left( \sum_{j\geq 0}b_{j+1}T^j\right)=
	\sum_{n\geq 0}\left( \sum_{i+j=n}a_{i}\sigma^{i}(b_{j+1})\right) T^n.
	$$
	It follows that 
	$$s(fg)-fs(g)=\sum_{n\geq 0}a_{n+1}\sigma^{n+1}(b_0) T^n=s(f)\sigma(b_0)=s(f)\sigma(g(0)).$$
	The second formula can be derived by setting $g=f^{-1}$ in the first formula and using the fact that $s(1)=0$. 
\end{proof}
In what follows, we identify the ring $K[[T;\sigma]]^{m\times m}$ of $m\times m$ matrices over  $K[[T;\sigma]]$ with the ring $K^{m\times m}[[T;\sigma]]$ of twisted polynomials over the ring $K^{m\times m}$. 
	\begin{lemma}\label{rationalmatrix}
		Let $M\in K[[T;\sigma]]^{m\times m}$ be a matrix with $M(0)=0$. If the entries of $M$ are rational twisted series, then so are the entries of the matrix $(I_m-M)^{-1}=\sum_{n\geq 0} M^n$, where $I_m$ denotes the identity matrix of order $m$. 
	\end{lemma}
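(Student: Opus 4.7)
The plan is to reduce matrix inversion over $K[[T;\sigma]]$ to matrix inversion over the subring $K(T;\sigma)$, using the fact that both rings embed into the common skew field $K((T;\sigma))$, inside which a matrix has at most one inverse. Since by definition a twisted series is rational iff it lies in $K(T;\sigma)\cap K[[T;\sigma]]$, showing that the inverse computed in $K[[T;\sigma]]^{m\times m}$ actually has its entries in $K(T;\sigma)$ will finish the job.

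First I would note that because $M(0)=0$ the matrix $I_m-M$ is invertible in $K[[T;\sigma]]^{m\times m}$ with inverse $\sum_{n\geq 0}M^{n}$ (this sum converges in the neighborhood basis furnished by the left ideals $K[[T;\sigma]]T^n$, since $M$ has zero constant term). Rationality of the entries of $M$ then gives $I_m-M\in K(T;\sigma)^{m\times m}$, and the goal is to show this matrix is already invertible in $K(T;\sigma)^{m\times m}$. For this I would regard $K(T;\sigma)^{m\times m}$ as the endomorphism ring of the finite-dimensional right $K(T;\sigma)$-vector space $K(T;\sigma)^m$. Since $K(T;\sigma)$ is a skew field, the standard linear-algebra argument gives that injective endomorphisms of a finite-dimensional vector space are bijective. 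So it suffices to check that $(I_m-M)v=0$ with $v\in K(T;\sigma)^m$ forces $v=0$; viewing $v$ inside $K((T;\sigma))^m$ and using that $I_m-M$ is already invertible there by the preceding paragraph, this is immediate.

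Having produced an inverse $N\in K(T;\sigma)^{m\times m}$, uniqueness of matrix inverses in the overring $K((T;\sigma))^{m\times m}$ forces $N=\sum_{n\geq 0}M^{n}$. Therefore the entries of $\sum_{n\geq 0}M^{n}$ lie simultaneously in $K(T;\sigma)$ and in $K[[T;\sigma]]$, i.e.\ are rational. The main delicate point is justifying the noncommutative linear-algebra step (injective implies bijective for finite-dimensional endomorphisms over a skew field) and making sure the two notions of inverse really do live inside $K((T;\sigma))^{m\times m}$; once this is set up the rest is formal. A fully elementary alternative, if one prefers to avoid appealing to linear algebra over a skew field, is induction on $m$ using the noncommutative Schur-complement formula, which works because the rational series form a subring of $K[[T;\sigma]]$ closed under inversion when the constant term is nonzero (a one-line consequence of Corollary~\ref{denominator1}), and the Schur complement has constant term $1$.
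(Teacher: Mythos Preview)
Your argument is correct but follows a genuinely different route from the paper. The paper proves the lemma by induction on $m$ via the block Schur-complement formulas: writing $M=\begin{pmatrix}N&X\\Y&f\end{pmatrix}$, it solves the $2\times 2$ block system for $(I_m-M)^{-1}$ and observes that each block of the inverse is built from the entries of $M$, from $(I_{m-1}-N)^{-1}$, and from inverses of series with constant term $1$, all of which are rational by the induction hypothesis and by closure of rational series under sums, products, and such inverses. Your proof instead works abstractly: you note that $I_m-M$ is invertible in $K[[T;\sigma]]^{m\times m}$, then use linear algebra over the skew field $K(T;\sigma)$ (injective $\Rightarrow$ bijective on a finite-dimensional vector space) together with the common overfield $K((T;\sigma))$ to force the inverse to lie in $K(T;\sigma)^{m\times m}\cap K[[T;\sigma]]^{m\times m}$. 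This is cleaner and avoids any explicit matrix algebra, at the cost of invoking the ambient Laurent field and the (standard) fact that rank theory works over division rings; the paper's approach is more hands-on and entirely internal to $K[[T;\sigma]]$. Amusingly, the ``elementary alternative'' you sketch at the end---induction with Schur complements---is exactly the paper's proof.
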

	\begin{proof}
		We prove the lemma by induction on $n$. The assertion is trivial when $n=1$. To prove the induction step, we write $M$ as
		$$M=\begin{pmatrix}
			N&X\\
			Y& f
		\end{pmatrix}.$$
	where $N$ is a square matrix of order $m-1$, $X$ is a column vector of appropriate size, $Y$ is a row vector of appropriate size, and $f\in K[[T;\sigma]]$. We write  $(I_m-M)^{-1}$ in a similar way:
	$$(I_m-M)^{-1}=\begin{pmatrix}
		N_1&X_1\\
		Y_1& f_1
	\end{pmatrix}.$$
	The relation $(I_m-M)^{-1}(I_m-M)=1$ implies that
	$$\begin{cases}
		N_1(I_{m-1}-N)-X_1Y=I_{m-1}\\
		-N_1X+X_1(1-f)=0\\
		Y_1(I_{m-1}-N)-f_1Y=0\\
		-Y_1X+f_1(1-f)=1
	\end{cases}.
	$$
	We can solve this system of equations: The second and third equations give
	$$
	X_1=N_1X(1-f)^{-1},\,\, Y_1=f_1Y(I_{m-1}-N)^{-1},
	$$
	using which we obtain the equations
	$$\begin{cases}
		N_1(I_{m-1}-N)-N_1X(1-f)^{-1}Y=I_{m-1}\\
		-f_1Y(I_{m-1}-N)^{-1}X+f_1(1-f)=1
	\end{cases}.
	$$
	It follows that
	$$
	\begin{cases}
		N_1=\left( I_{m-1}-N-X(1-f)^{-1}Y\right)^{-1} \\
		X_1=\left( I_{m-1}-N-X(1-f)^{-1}Y\right)^{-1}X(1-f)^{-1}\\
		f_1=\left( 1-f-Y(I_{m-1}-N)^{-1}X\right)^{-1} \\
		Y_1=\left( 1-f-Y(I_{m-1}-N)^{-1}X\right)^{-1}Y(I_{m-1}-N)^{-1}
	\end{cases}.
	$$
	By induction and using the fact that sums and products of rational twisted series are rational, we see that $N_1,X_1,f_1,Y_1$ are rational, and the proof of the induction step is finished.  
	\end{proof}
Next, we show that the product of $\sigma$-recognizable twisted series is $\sigma$-recognizable, and furthermore, the inverse of an invertible $\sigma$-recognizable twisted series is $\sigma$-recognizable. 
	\begin{lemma}\label{(L)Productandinverse}
	Let $f,g\in K[[T;\sigma]]$. If  $f,g$ are $\sigma$-recognizable, then $fg$ is $\sigma$-recognizable too. If $f$ is  $\sigma$-recognizable and $f(0)\neq 0$, then $f^{-1}\in K[[T;\sigma]]$  is $\sigma$-recognizable. 
\end{lemma}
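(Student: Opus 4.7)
The plan is to apply Proposition~\ref{(P)RightStableRecog}: for each of the two claims I would exhibit a finite-dimensional stable right vector subspace of $K[[T;\sigma]]$ containing the series in question, doing the stability check by means of Lemma~\ref{(L)sofproduct}.

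For the product, I would begin with finite-dimensional stable right subspaces $V \ni f$ and $W \ni g$ (produced by Proposition~\ref{(P)RightStableRecog}(ii) from given $\sigma$-linear representations) and form $U = fW + V$. This is a right $K$-subspace because $(fh)c = f(hc)$ for $h\in W$, $c\in K$, it is finite-dimensional, and it contains $fg = f\cdot g$. For stability, Lemma~\ref{(L)sofproduct} gives $s(fh) = f\, s(h) + s(f)\,\sigma(h(0))$ for any $h \in W$: the first summand lies in $fW$ since $W$ is stable, and the second lies in $V$ since $s(f) \in V$ and $\sigma(h(0)) \in K$. Combined with $s(V)\subset V$, this yields $s(U) \subset U$.

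For the inverse, starting from a finite-dimensional stable subspace $V \ni f$, I would take $U = f^{-1}(V + K)$, with $K$ identified with the constant twisted series. Then $U$ is a finite-dimensional right $K$-subspace (again by associativity, $(f^{-1}v)c = f^{-1}(vc)$) containing $f^{-1} = f^{-1}\cdot 1$. To check $s(U)\subset U$, take $u = f^{-1}v$ with $v \in V + K$: Lemma~\ref{(L)sofproduct} gives $s(u) = f^{-1}s(v) + s(f^{-1})\,\sigma(v(0))$. Since $s$ kills constants and $V$ is stable, $s(v) \in V \subset V + K$, so $f^{-1}s(v) \in U$; moreover the second formula of Lemma~\ref{(L)sofproduct} expresses $s(f^{-1}) = -f^{-1}s(f)\,\sigma(f^{-1}(0)) \in f^{-1}V \subset U$, whence $s(f^{-1})\,\sigma(v(0)) \in U$ as well by the right $K$-invariance of $U$.

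The argument is essentially forced by Proposition~\ref{(P)RightStableRecog} and Lemma~\ref{(L)sofproduct}, so I do not expect a genuine obstacle. The only points requiring care are the non-commutative bookkeeping coming from $s(fa) = s(f)\sigma(a)$ (Lemma~\ref{(L)Propertiesofs}(3)), which is what keeps the scalar $\sigma(h(0))$ on the right so that the $V$-terms stay in $V$, and, in the inverse case, the small trick of enlarging $V$ to $V + K$ so that the constant $1$ needed to express $f^{-1} = f^{-1}\cdot 1$ is available inside the enlargement.
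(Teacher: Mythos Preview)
Your proposal is correct and follows essentially the same approach as the paper: for the product you build the very same stable subspace $fW+V$, and for the inverse both you and the paper exhibit a finite-dimensional stable right subspace containing $f^{-1}$ using Lemma~\ref{(L)sofproduct}. The only cosmetic difference is that the paper first normalizes to $f(0)=1$, takes a stable $V\ni 1-f$, and uses $K+f^{-1}V$ (writing $f^{-1}=1+f^{-1}(1-f)$), whereas your choice $f^{-1}(V+K)$ with $V\ni f$ avoids that normalization; both verifications of stability are identical in substance.
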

\begin{proof}
	Let $f,g\in K[[T;\sigma]]$ be  $\sigma$-recognizable. By Proposition \ref{(P)RightStableRecog}, there are stable finite-dimensional right vector subspaces $V$ and $W$ of $K[[T;\sigma]]$ such that $f\in V$ and $g\in W$. Then, the right vector space $fW+V$ contains $fg$, is finite-dimensional, and is stable since, by Lemma \ref{(L)sofproduct}, $$s(fh)=fs(h)+s(f)\sigma(h(0))\in fW+V,$$ for all $h\in W$. Now, an application of Proposition \ref{(P)RightStableRecog} proves that $fg$ is $\sigma$-recognizable.
	
	To prove the second statement, let $f\in K[[T;\sigma]]$ be $\sigma$-recognizable  and satisfy $f(0)\neq 0$.  Without loss of generality, we may assume that $f(0)= 1$. By Corollary \ref{(C)Sumofrec}, $1-f$ is  $\sigma$-recognizable. Therefore, there exists a stable finite-dimensional right vector subspace $V$ of $K[[T;\sigma]]$ such that $1-f\in V$. 
	 The right vector space $K+f^{-1}V$ is finite-dimensional and contains $f^{-1}$ since $f^{-1}=1+f^{-1}(1-f)$. Furthermore, using  Lemma \ref{(L)sofproduct}, we see that	  
	$$s(f^{-1}h)=f^{-1}s(h)+s(f^{-1})\sigma(h(0))=$$
	$$f^{-1}s(h)+f^{-1}s(1-f)\sigma(f^{-1}(0))\sigma(h(0))\in K+f^{-1}V,$$ for all $h\in V$.	Therefore, $V$ is stable. By Proposition \ref{(P)RightStableRecog},  $f^{-1}$ must be $\sigma$-recognizable, and we are done. 
\end{proof}
Now, we can prove the following result: 
	\begin{theorem}\label{(T)RecognizableRational}
		A twisted series is rational iff it is  $\sigma$-recognizable. 
	\end{theorem}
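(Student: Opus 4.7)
The plan is to handle the two implications separately, each as a short assembly of lemmas already proved in this section.

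For the ($\sigma$-recognizable $\Rightarrow$ rational) direction, I would start from a $\sigma$-linear representation $(X,A,Y)$ of $f$ and use the identity $f=X(I_m-AT)^{-1}Y$ recorded at the beginning of this section. The matrix $AT\in K[[T;\sigma]]^{m\times m}$ has polynomial (hence rational) entries and satisfies $(AT)(0)=0$, so Lemma \ref{rationalmatrix} yields that all entries of $(I_m-AT)^{-1}$ are rational twisted series. Since $K(T;\sigma)$ is a subfield of $K((T;\sigma))$ closed under sums and products, the expression $X(I_m-AT)^{-1}Y$ — a $K$-bilinear combination of entries of $(I_m-AT)^{-1}$ — lies in $K(T;\sigma)$, so $f$ is rational.

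For the converse, I would first apply Corollary \ref{denominator1} to write $f=P^{-1}Q$ with $P,Q\in K[T;\sigma]$ and $P(0)=1$. The next step is to note that every twisted polynomial is $\sigma$-recognizable: for $R=\sum_{i=0}^n a_iT^i$ one has $s(T^k)=T^{k-1}$, so $V(R)\subseteq \sum_{i=0}^{n} KT^i$ is finite-dimensional and stable, and Proposition \ref{(P)RightStableRecog} gives recognizability of $R$. In particular, both $P$ and $Q$ are $\sigma$-recognizable. Since $P(0)=1\neq 0$, the inversion half of Lemma \ref{(L)Productandinverse} makes $P^{-1}\in K[[T;\sigma]]$ $\sigma$-recognizable, and a final application of the product half of the same lemma shows that $f=P^{-1}Q$ is $\sigma$-recognizable.

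I do not foresee a substantive obstacle: Lemma \ref{rationalmatrix} was tailored precisely to pass from matrix-valued series to rational entries, Corollary \ref{denominator1} supplies the normalization $P(0)=1$ that is needed to invoke Lemma \ref{(L)Productandinverse}, and that lemma already encapsulates the closure of $\sigma$-recognizable series under products and inverses. The only minor routine verification — that polynomials themselves are $\sigma$-recognizable — is immediate from the action of the shift on monomials, so the whole argument reduces to bookkeeping with the already-developed machinery.
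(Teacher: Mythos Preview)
Your proposal is correct and follows essentially the same route as the paper: Lemma \ref{rationalmatrix} for the $\sigma$-recognizable $\Rightarrow$ rational direction, and recognizability of polynomials combined with Lemma \ref{(L)Productandinverse} for the converse. The only cosmetic differences are that you make the normalization $P(0)=1$ explicit via Corollary \ref{denominator1} (the paper leaves this implicit) and you argue recognizability of polynomials through Proposition \ref{(P)RightStableRecog} rather than Proposition \ref{(P)smalleststable}.
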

	\begin{proof}
		First, we prove that every rational twisted series is $\sigma$-recognizable. We note that that any twisted polynomial is $\sigma$-recognizable. In fact, if $P$ is a twisted polynomial, then $s^{n}(P)=0$ for all $n>\deg P$. By Proposition \ref{(P)smalleststable}, $P$ must be  $\sigma$-recognizable.  Since any rational twisted series is of the form $P^{-1}Q$, where $P,Q$ are twisted polynomials,  the result follows from Lemma \ref{(L)Productandinverse}. 
		
		Conversely, let $f=\sum_{n\geq 0}a_nT^n$ be $\sigma$-recognizable.  Suppose that $(X,A,Y)$ is a $\sigma$-linear representation of $f$.  Consider the matrix $AT\in K[[T;\sigma]]^{m\times m}$. By Lemma \ref{rationalmatrix}, all the entries of the matrix $(I_m-AT)^{-1}$ are rational. Since
		$$(I_m-AT)^{-1}=\sum_{n\geq 0}(AT)^n=\sum_{n\geq 0}\left( A\sigma(A)\cdots\sigma^{n-1}(A)\right)T^n,$$
		and $f=X(I_m-AT)^{-1}Y$, we conclude that $f$ is rational. 	
			    
	\end{proof}

\end{subsection}
\begin{subsection}{Hadamard product} 
	In this part, we prove that the Hadamard product of rational twisted series is rational. 	
	The Hadamard product of twisted series $f=\sum_{i\geq 0}a_iT^i$ and $ g(T)=\sum_{i\geq 0}b_iT^i$
	is defined to be the twisted series
	$$f\odot g=\sum_{i\geq 0}a_ib_iT^i.
	$$
	The proof of the following lemma is left to the reader. 
	\begin{lemma}\label{(L)HadamardLemma}
		If $f,f_1,f_2,g,g_1,g_2\in K[[T;\sigma]]$ and $a,b,c,d\in K$, then
		$$(f_1+f_2)\odot g=f_1\odot g+f_2\odot g,$$
		$$f\odot (g_1+g_2)=f\odot g_1+f\odot g_2,$$
		$$(afb)\odot (cgd)=ac(f\odot g)bd,$$
		$$s(f\odot g)=s(f)\odot s(g).$$
	\end{lemma}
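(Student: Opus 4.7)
The plan is to verify each of the four identities directly at the level of coefficients: writing $f=\sum_i a_i T^i$, $g=\sum_i b_i T^i$ (and analogously for $f_1,f_2,g_1,g_2$), both sides of each identity become twisted series whose $n$-th coefficients can be compared one at a time. Since $f\odot g$ is defined coefficient-wise, each of these comparisons reduces to a straightforward scalar identity in $K$, so no clever manipulation of generating series is needed.

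For identities (1) and (2), the distributivity of $\odot$ reduces on the $n$-th coefficient to $(a_n+a_n')b_n=a_n b_n+a_n' b_n$ (and the mirror statement for the right factor), so they require nothing beyond distributivity in $K$.

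The one identity that requires any bookkeeping is (3). Here I would first push the scalars $b,d\in K$ past the powers of $T$ using $T^i c=\sigma^i(c)T^i$, so that $afb=\sum_i a a_i\sigma^i(b)T^i$ and $cgd=\sum_i c b_i\sigma^i(d)T^i$. The $n$-th coefficient of the Hadamard product on the left-hand side is then the scalar product $a a_n\sigma^n(b)\cdot c b_n\sigma^n(d)$. Using commutativity of $K$ together with the fact that $\sigma^n$ is a ring homomorphism (so $\sigma^n(b)\sigma^n(d)=\sigma^n(bd)$), this regroups as $ac\cdot a_n b_n\cdot\sigma^n(bd)$, which is precisely the $n$-th coefficient of $ac(f\odot g)bd$ after performing the same $T$-past-scalar manipulation on the right-hand side.

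Identity (4) is immediate: the $n$-th coefficient of $s(f\odot g)$ is $a_{n+1}b_{n+1}$, which is visibly the $n$-th coefficient of $s(f)\odot s(g)$. The only real (and mild) obstacle in the whole lemma is keeping track of the twisting in (3); once all scalars are collected to the left of each $T^i$, commutativity of $K$ and multiplicativity of $\sigma$ do all the work.
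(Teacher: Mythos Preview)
Your proof is correct and is exactly the straightforward coefficient-by-coefficient verification that the paper implicitly has in mind (the paper itself omits the proof with ``left to the reader''). In particular, your handling of identity (3)---pushing scalars past $T^i$ via $T^i c=\sigma^i(c)T^i$, then using commutativity of $K$ and multiplicativity of $\sigma^n$ to regroup---is precisely the intended computation.
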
	 
	Using this lemma, we prove the following result:
	\begin{theorem}
		The Hadamard product of  rational twisted series is  rational.
	\end{theorem}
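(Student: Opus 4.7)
The plan is to reduce rationality to $\sigma$-recognizability (via Theorem \ref{(T)RecognizableRational}), and to exhibit, given $\sigma$-recognizable $f,g$, a single finite-dimensional stable right vector subspace of $K[[T;\sigma]]$ containing $f\odot g$; Proposition \ref{(P)RightStableRecog} will then conclude.

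Concretely, suppose $f,g$ are rational, hence $\sigma$-recognizable. Choose finite-dimensional stable right vector subspaces $V$ and $W$ of $K[[T;\sigma]]$ with $f\in V$ and $g\in W$, and fix bases $f_1,\dots,f_m$ of $V$ and $g_1,\dots,g_n$ of $W$. I would define
$$U \;=\; \sum_{i,j}(f_i\odot g_j)\,K,$$
i.e.\ the right $K$-span of all pairwise Hadamard products of basis elements. This is finite-dimensional (of dimension at most $mn$), and using the bilinearity and scalar rules from Lemma \ref{(L)HadamardLemma} we immediately have $f\odot g\in U$, since if $f=\sum_i f_i c_i$ and $g=\sum_j g_j d_j$ then
$$f\odot g \;=\; \sum_{i,j}(f_ic_i)\odot(g_jd_j) \;=\; \sum_{i,j}(f_i\odot g_j)\,c_id_j.$$

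The key verification is that $U$ is stable. Applying the fourth identity of Lemma \ref{(L)HadamardLemma}, $s(f_i\odot g_j)=s(f_i)\odot s(g_j)$. Since $V$ and $W$ are stable, one can write $s(f_i)=\sum_k f_k a_{ki}$ and $s(g_j)=\sum_\ell g_\ell b_{\ell j}$ for scalars $a_{ki},b_{\ell j}\in K$. Then, using the scalar-compatibility identity $(f_k a_{ki})\odot(g_\ell b_{\ell j})=(f_k\odot g_\ell)\,a_{ki}b_{\ell j}$ together with bilinearity,
$$s(f_i\odot g_j) \;=\; \sum_{k,\ell}(f_k\odot g_\ell)\,a_{ki}b_{\ell j} \;\in\; U.$$
Since the $f_i\odot g_j$ right-span $U$, this suffices for $s(U)\subset U$.

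Thus $U$ is a finite-dimensional stable right vector subspace of $K[[T;\sigma]]$ containing $f\odot g$, so by Proposition \ref{(P)RightStableRecog} the series $f\odot g$ is $\sigma$-recognizable, and by Theorem \ref{(T)RecognizableRational} it is rational. I do not anticipate a real obstacle: all the twisting is absorbed cleanly by the third identity of Lemma \ref{(L)HadamardLemma}, which is precisely what allows the right $K$-module structure to be respected when passing a scalar across a Hadamard product. The only point deserving care is to use right (not left) scalars consistently so that the identity $(fa)\odot(gb)=(f\odot g)\,ab$ applies without introducing stray applications of $\sigma$.
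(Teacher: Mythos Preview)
Your proposal is correct and follows essentially the same approach as the paper: both reduce to $\sigma$-recognizability via Theorem \ref{(T)RecognizableRational}, pick stable finite-dimensional right subspaces $V\ni f$, $W\ni g$ with bases $f_i$, $g_j$, and show that the right $K$-span of the $f_i\odot g_j$ is a finite-dimensional stable subspace containing $f\odot g$, using exactly the identities of Lemma \ref{(L)HadamardLemma}. The only cosmetic difference is that the paper first defines $V\odot W$ as generated by all $h\odot l$ with $h\in V$, $l\in W$, and then observes it is spanned by the $f_i\odot g_j$, whereas you take this span as the definition from the outset.
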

	\begin{proof}
		Let $f,g\in K[[T;\sigma]]$ be rational. By Theorem \ref{(T)RecognizableRational}, $f$ and $g$ are  $\sigma$-recognizable. Therefore, by Proposition \ref{(P)RightStableRecog}, there are stable finite-dimensional right vector subspaces $V$ and $W$ of $K[[T;\sigma]]$ such that $f\in V$ and $g\in W$. Let $f_1,...,f_m$ be a basis of $V$ and $g_1,...,g_n$ be a basis of $W$. Let 		
		$V\odot  W$ be the right vector subspace of $K[[T;\sigma]]$ generated by twisted series of the form $h\odot l$ where $h\in V$ and $l\in W$.  Clearly, $f\odot g\in V\odot  W$. By Lemma \ref{(L)HadamardLemma}, we have
		$$h\odot l=\sum_i\sum_j(f_i\odot g_j)a_ib_j\in V\odot  W,$$
		where 
		$$h=\sum_if_ia_i\in V \text{ and } l=\sum_jg_jb_j\in W.$$
		Therefore, the twisted series $f_i\odot g_j$, where $i=1,...,m$ and $j=1,...,n$, generate $V\odot W$ as a right vector space. In particular, we see that  $V\odot W$  is finite-dimensional as a right vector space over $K$.  The identity (see Lemma \ref{(L)HadamardLemma})
		$$s(h\odot l)=s(h)\odot s(l),$$
		shows that $V\odot  W$  is stable. Therefore, by  Proposition \ref{(P)RightStableRecog},  $f\odot  g$ must be $\sigma$-recognizable. The result follows by applying Theorem \ref{(T)RecognizableRational}.      
	\end{proof}
	
\end{subsection}
\end{section} 
\begin{section}{Syntactic left ideals} 
	In this section, we assign a left ideal $I_f$ of $K[T;\sigma]$ to a given twisted series $I_f$. It is shown that a twsited series $f$ is rational iff $I_f$ is nonzero.   
	\begin{subsection}{$K[[T;\sigma]]$ as  a left $K[T;\sigma]$-module} 	
		We consider $K[T;\sigma]$ as a left vector space (over $K$). Clearly, the elements $1,T,T^2,...$ form a basis for this vector space. Given  a twisted series $f=\sum_{n\geq 0}a_nT^n\in K[[T;\sigma]]$, we define a map $L_f:K[T;\sigma]\to K[T;\sigma]$ by 
		$$L_f(\sum_{n}b_nT^n)=\sum_n  b_na_n.$$
		It is easy to see that $L_f$ is a (left) $K$-linear map, and moreover, the assignment $$f\mapsto L_f$$ establishes an isomorphism between $K[[T;\sigma]]$ and the dual of $K[T;\sigma]$ as (left) vector spaces. In fact, $L_f(T^n)=a_n$ for all $n\geq 0$. Using this assignment, one can consider $K[[T;\sigma]]$ as a left $K[T;\sigma]$-module. More precisely, 	
	 given $f\in K[[T;\sigma]]$ and $P\in K[T;\sigma]$, we  define a map $K[T;\sigma]\to K$ as follows
	$$Q\mapsto L_f(QP).$$
	It is clear that this map is (left) $K$-linear. Thus, there is a unique twisted series $P\circ f\in K[[T;\sigma]]$   such that $L_{P\circ f}(Q)=L_f(QP)$. 
	\begin{lemma}
		The operation $(P,f)\mapsto P\circ f$ turns $K[[T;\sigma]]$ into  a left $K[T;\sigma]$-module.  
	\end{lemma}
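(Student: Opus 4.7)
The plan is to verify the four left-module axioms directly from the defining property $L_{P\circ f}(Q)=L_f(QP)$ and the uniqueness statement that was used to define $P\circ f$. Recall the preceding paragraph establishes that $f\mapsto L_f$ is a bijection between $K[[T;\sigma]]$ and the $K$-linear dual of $K[T;\sigma]$; this uniqueness is the only tool I will need, since all of the axioms reduce to identities of the corresponding functionals on $K[T;\sigma]$.

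First I would dispatch the unit axiom: for every $Q\in K[T;\sigma]$ one has $L_{1\circ f}(Q)=L_f(Q\cdot 1)=L_f(Q)$, so uniqueness gives $1\circ f=f$. Next, additivity in the first argument follows from the distributivity of multiplication in $K[T;\sigma]$: $L_{(P_1+P_2)\circ f}(Q)=L_f\bigl(Q(P_1+P_2)\bigr)=L_f(QP_1)+L_f(QP_2)=L_{P_1\circ f}(Q)+L_{P_2\circ f}(Q)$, whence $(P_1+P_2)\circ f=P_1\circ f+P_2\circ f$. Additivity in the second argument uses that the assignment $f\mapsto L_f$ is itself additive (a free consequence of its defining formula), giving $L_{P\circ(f+g)}(Q)=L_{f+g}(QP)=L_f(QP)+L_g(QP)=L_{P\circ f+P\circ g}(Q)$.

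The key axiom is associativity, and again it reduces to a one-line computation using the associativity of multiplication in $K[T;\sigma]$: for all $Q$,
\[
L_{(P_1P_2)\circ f}(Q)=L_f(QP_1P_2)=L_{P_2\circ f}(QP_1)=L_{P_1\circ(P_2\circ f)}(Q),
\]
so $(P_1P_2)\circ f=P_1\circ(P_2\circ f)$ by uniqueness. Together with the unit axiom this proves that $K[[T;\sigma]]$ is a left $K[T;\sigma]$-module.

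There is no genuine obstacle here: the construction was engineered so that the module axioms are forced by the associativity, distributivity, and unit laws of the ring $K[T;\sigma]$, transported through the duality $f\leftrightarrow L_f$. The only point that deserves a moment's attention is the tacit use of uniqueness at each step, which is exactly what the previous paragraph guarantees.
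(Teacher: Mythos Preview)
Your proof is correct and follows essentially the same approach as the paper's: both verify the module axioms via the defining identity $L_{P\circ f}(Q)=L_f(QP)$ and uniqueness, with the associativity computation being literally identical. The only difference is that you spell out the unit and additivity verifications explicitly, whereas the paper dismisses them with ``It is easy to check.''
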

\begin{proof}
	It is easy to check that 
	$$(P_1+P_2)\circ f=P_1\circ f+P_2\circ f,$$
	$$P\circ (f_1+f_2)=P\circ f_1+P\circ f_2,$$
	$$1\circ f=f,$$
	 for all $P,P_1,P_2\in K[T;\sigma]$ and 
	 $f,f_1,f_2\in K[[T;\sigma]]$. We also have
	 $$L_{(P_1P_2)\circ f}(Q)=L_f(QP_1P_2)=L_{P_2\circ f}(QP_1)=L_{P_1\circ(P_2\circ f)}(Q),$$ showing that $(P_1P_2)\circ f=P_1\circ(P_2\circ f)$. Therefore, $(P,f)\mapsto P\circ f$ turns $K[[T;\sigma]]$ into  a left $K[T;\sigma]$-module.  
\end{proof}
	Note that if 	$f=\sum_{n}a_nT^n\in K[[T;\sigma]]$ and $P=\sum_{n}b_nT^n\in K[T;\sigma]$, then 
	\begin{equation}\label{Pof}
		P\circ f = \sum_{n\geq 0}\left( \sum_{i\geq 0}a_{n+i}\sigma^n(b_i)\right) T^n.
	\end{equation}
	In particular, we have $(P\circ f)(0)= \sum_{i}b_ia_{i}=L_f(P)$, $T\circ f=s(f)$ and $(a\circ f)=fa$ for all $a\in K$. 
	\end{subsection}
	\begin{subsection}{ The syntactic left ideal of a twisted series} 	
		Considering $K[[T;\sigma]]$ as a  left $K[T;\sigma]$-module, we define the \textit{syntactic left ideal} $I_f$ of $f\in K[[T;\sigma]]$ to be the annihilator of $f$, that is,
		$$I_f=\{P\in K[T;\sigma]\,|\, P\circ f=0\}.$$
		Clearly, $I_f$ is a left ideal of $K[T;\sigma]$. 
		Note that the map $P\mapsto P\circ f$ establishes an isomorphism $$K[T;\sigma]/I_f\to K[T;\sigma]\circ f$$ of left $K[T;\sigma]$-modules. 
		\begin{lemma}\label{(L)Largestideal}
			The syntactic left ideal of $f\in K[[T;\sigma]]$ is the largest left ideal of $K[T;\sigma]$ contained in $\ker L_f$.
		\end{lemma}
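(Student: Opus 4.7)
The plan is to prove the two set inclusions characterizing $I_f$ as the largest left ideal inside $\ker L_f$: first that $I_f \subseteq \ker L_f$ and $I_f$ is a left ideal (already noted), and second that any left ideal $J \subseteq \ker L_f$ must satisfy $J \subseteq I_f$. The whole argument will rest on the two identities recorded just before the lemma, namely $(P\circ f)(0) = L_f(P)$ and $L_{P\circ f}(Q) = L_f(QP)$, together with the fact that the assignment $g \mapsto L_g$ is an isomorphism between $K[[T;\sigma]]$ and the dual of $K[T;\sigma]$.

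First I would verify $I_f \subseteq \ker L_f$. If $P \in I_f$ then $P\circ f = 0$, and in particular $(P\circ f)(0) = 0$. Invoking the identity $(P\circ f)(0) = L_f(P)$ yields $L_f(P) = 0$, so $P \in \ker L_f$. This shows $I_f$ itself is a candidate left ideal sitting inside $\ker L_f$.

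Next I would show the maximality: let $J$ be any left ideal of $K[T;\sigma]$ with $J \subseteq \ker L_f$, and pick $P \in J$. For every $Q \in K[T;\sigma]$, the product $QP$ again lies in $J$ because $J$ is a left ideal, hence $L_f(QP) = 0$. Using the identity $L_{P\circ f}(Q) = L_f(QP)$, this means $L_{P\circ f}(Q) = 0$ for every $Q \in K[T;\sigma]$, so $L_{P\circ f}$ is the zero linear functional on $K[T;\sigma]$. Since the correspondence $g \mapsto L_g$ is a linear isomorphism from $K[[T;\sigma]]$ onto the dual of $K[T;\sigma]$, this forces $P\circ f = 0$, i.e., $P \in I_f$. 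Hence $J \subseteq I_f$, completing the proof.

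I do not expect any real obstacle here; the whole statement is essentially a direct translation of the defining identities $(P\circ f)(0) = L_f(P)$ and $L_{P\circ f}(Q) = L_f(QP)$, and the only structural input is the injectivity of $g \mapsto L_g$. The only subtlety worth mentioning is that the inclusion $I_f \subseteq \ker L_f$ uses only the value of $P\circ f$ at $0$, whereas the maximality direction genuinely needs the full identity $L_{P\circ f}(Q) = L_f(QP)$ applied for all $Q$, which is what converts the one-point vanishing condition defining $\ker L_f$ into the identically-zero condition defining $I_f$.
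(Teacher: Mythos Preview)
Your proof is correct and follows essentially the same approach as the paper: both directions use exactly the identities $(P\circ f)(0)=L_f(P)$ and $L_{P\circ f}(Q)=L_f(QP)$ together with the injectivity of $g\mapsto L_g$, in the same order and for the same purpose. Your write-up is in fact slightly more explicit than the paper's at the step where $QP\in J\subseteq\ker L_f$ is invoked, but there is no substantive difference in the argument.
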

	\begin{proof}
		If $P\in I_f$, then $P\circ f=0$. It follows that $L_f(P)=(P\circ f)(0)=0$, that is, $P\in \ker L_f$. This proves that $I_f\subset \ker L_f$. To finish the proof of the lemma, we need to show that any left ideal contained in $\ker L_f$ is a subset of $I_f$. Suppose that $I\subset \ker L_f$ is a left ideal of $K[T;\sigma]$. Let  $P\in I$. We have $K[T;\sigma]P\subset I_f$, that is, $L_f(QP)=0$ for all $Q\in K[T;\sigma]$. Since  
		$$ L_{P\circ f}(Q)=L_f(QP),$$
		we see that $P\circ f=0$, hence $P\in I_f$.  
	\end{proof}

	\end{subsection}
	\begin{subsection}{The rank of a twisted series} 
			The \textit{rank} of a twisted  series $f\in K[[T;\sigma]]$ is defined to be the dimension of $K[T;\sigma]/I_f$ as a (left) vector space. Note that the rank of $f\in K[[T;\sigma]]$ is equal to the co-dimension of $I_f$ in   $K[T;\sigma]$.	
		\begin{theorem}\label{(T)Finiterank}
			(i) The rank of a rational twisted series is less than or equal to the dimension of any of its $\sigma$-linear representations. \\
			(ii) The rank of a twisted series $f$ is equal to the dimension of the right vector space $V(f)$ defined in Subsection \ref{(S)Stability}. In particular, a twisted series is rational iff it has a finite rank.   
		\end{theorem}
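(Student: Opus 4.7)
The plan is to identify the module-theoretic object $K[T;\sigma]/I_f$ with the geometric object $V(f)$. The key computation is the closed form of $P\circ f$: using the module axioms together with the identities $T\circ f=s(f)$ and $a\circ g=ga$ (both recorded in the preceding subsection), one gets, for $P=\sum_i b_iT^i$,
\[
P\circ f \;=\; \sum_i (b_iT^i)\circ f \;=\; \sum_i b_i\circ s^i(f) \;=\; \sum_i s^i(f)\,b_i.
\]
Consequently $K[T;\sigma]\circ f=\sum_{i\geq 0} s^i(f)K=V(f)$.

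For (ii), the evaluation map $\phi:K[T;\sigma]\to V(f)$, $\phi(P)=P\circ f$, is surjective with kernel $I_f$, so it induces a bijection $\bar\phi:K[T;\sigma]/I_f\to V(f)$. The displayed formula shows $\phi(aP)=\phi(P)a$ for $a\in K$, so $\bar\phi$ converts the left $K$-action on the source into the right $K$-action on the target. A straightforward check (using additivity and this anti-linearity) shows that $\bar\phi$ sends any left $K$-basis of $K[T;\sigma]/I_f$ to a right $K$-basis of $V(f)$. Hence $\mathrm{rank}(f)=\dim_K K[T;\sigma]/I_f=\dim_K V(f)$. The final ``in particular'' claim then follows by combining this equality with the fact (noted just before Proposition \ref{(P)smalleststable}) that $V(f)$ is finite-dimensional iff $f$ is $\sigma$-recognizable, together with Theorem \ref{(T)RecognizableRational}.

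For (i), let $(X,A,Y)$ be a $\sigma$-linear representation of $f$ of dimension $m$. By Proposition \ref{(P)RightStableRecog}(ii), the $m$ components of the row vector $X(I_m-AT)^{-1}$ span a stable right vector subspace $W\subseteq K[[T;\sigma]]$ containing $f$, with $\dim_K W\leq m$. Since $V(f)$ is the smallest stable right subspace containing $f$, we have $V(f)\subseteq W$, hence $\dim_K V(f)\leq m$, and by (ii) the rank of $f$ is at most $m$.

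The bulk of the argument is bookkeeping rather than hard analysis; the only subtle point is the left/right swap induced by the twist $a\circ g=ga$, and one must make sure that the induced bijection $\bar\phi$ genuinely transports bases between the left vector space structure on $K[T;\sigma]/I_f$ and the right vector space structure on $V(f)$. This works because $K$ is commutative and $\bar\phi$ is additive with $\bar\phi(aP+I_f)=\bar\phi(P+I_f)\,a$.
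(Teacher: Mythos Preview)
Your proof is correct. For part~(ii) you follow essentially the same line as the paper: both identify $K[T;\sigma]\circ f$ with $V(f)$ via the computation $P\circ f=\sum_i s^i(f)\,b_i$ and then observe that the module isomorphism $K[T;\sigma]/I_f\cong K[T;\sigma]\circ f$ carries the left $K$-structure on the source to the right $K$-structure on the target.

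For part~(i), your route genuinely differs from the paper's. The paper argues directly, without invoking (ii): it defines a left-$K$-linear map
\[
L:K[T;\sigma]\to K^{m\times 1},\qquad L\Bigl(\sum_n b_nT^n\Bigr)=\sum_n b_n\,A\sigma(A)\cdots\sigma^{n-1}(A)\,\sigma^n(Y),
\]
verifies by explicit computation that $\ker L$ is a left ideal contained in $\ker L_f$, uses Lemma~\ref{(L)Largestideal} to get $\ker L\subset I_f$, and concludes that the codimension of $I_f$ is at most $m$. You instead deduce (i) from (ii): Proposition~\ref{(P)RightStableRecog}(ii) provides a stable right subspace $W$ of dimension at most $m$ containing $f$, the minimality of $V(f)$ gives $\dim V(f)\le m$, and (ii) converts this into the rank bound. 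Your argument is shorter and avoids the ideal-theoretic verification. The paper's approach, on the other hand, does not depend on (ii), and---more significantly---the map $L$ it introduces is reused later (in Proposition~\ref{(P)Idealrepresentation} and Lemma~\ref{(L)CharacRegular}) to describe $I_f$ explicitly in terms of a reduced representation, so the extra work pays off downstream.
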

		\begin{proof}
	Note that by Theorem \ref{(T)RecognizableRational}, any rational twisted series is  $\sigma$-recognizable.\\
	(i) Let $(X,A,Y)$ be a $\sigma$-linear representation of a rational twisted series $f$.
	We consider the map $L:K[T;\sigma]\to K^{m\times 1}$ defined by 
	$$L\left(\sum_{n}b_nT^n \right) =\sum_{n}b_nA\sigma(A)\cdots\sigma^{n-1}(A)\sigma^{n} (Y).$$
	Clearly, $L$ is a linear map of left vector spaces. 
	I claim that $\ker L $ is a left ideal of $K[T;\sigma]$ contained in $\ker L_f$.  If $P=\sum_{n}b_nT^n\in \ker L $, then
	$$L_f(P)=\sum_{n}b_na_{n}= \sum_{n}b_nXA\sigma(A)\cdots\sigma^{n-1}(A)\sigma^{n}(Y) =$$
	$$X\left( \sum_{n}b_nA\sigma(A)\cdots\sigma^{n-1}(A)\sigma^{n}(Y)\right)=0,$$
	proving that $\ker L \subset \ker L_f$.  It remains to show that $T^mP\in \ker L $ for all $m$ and  $P=\sum_{n}b_nT^n\in \ker L $: We have
	$$T^mP=T^m\left( \sum_{n}b_nT^n\right)=\sum_{n}\sigma^m(b_n)T^{m+n}=\sum_{n\geq m}\sigma^m(b_{n-m})T^{n}.$$
	Since
	$$	\sum_{n\geq m}\sigma^m(b_{n-m})A\sigma(A)\cdots\sigma^{n-1}(A)\sigma^{n}(Y)$$
	$$=A\sigma(A)\cdots\sigma^{m-1}(A)\sigma^m\left( \sum_{n\geq m}b_{n-m}A\sigma(A)\cdots\sigma^{n-m-1}(A)\sigma^{n-m}(Y)\right) $$
	$$
	=A\sigma(A)\cdots\sigma^{m-1}(A)\sigma^m\left( \sum_{n\geq 0}b_{n}A\sigma(A)\cdots\sigma^{n-1}(A)\sigma^{n}(Y)\right)=0,
	$$
	 we see that $T^mP\in \ker L $. This finishes the proof of the claim. By Lemma \ref{(L)Largestideal}, we must have $\ker L \subset I_f$.  Since the co-dimension of $\ker L$ in $K[T;\sigma]$ is less than or equal to $m$, we see that the co-dimension of $I_f$, which is equal to the rank of $f$, is at most $m$. \\	
	(ii) Let $f=\sum_{n\geq 0}a_nT^n\in K[[T;\sigma]]$. The $K[T;\sigma]$-module isomorphism 
	$$K[T;\sigma]/I_f\to K[T;\sigma]\circ f,\,\, P\mapsto P\circ f,$$
	induces a structure of a vector space over $K$ on  $K[T;\sigma]\circ f$. This structure is given by $(a,g)\mapsto a\circ g=ga$, that is, $K[T;\sigma]\circ f$ is a right vector subspace of $K[[T;\sigma]]$.  Since  $s(g)= T\circ g$, for all $g\in K[[T;\sigma]]$, we see that $$K[T;\sigma]\circ f=\sum_{n\geq 0}s^n(f)K=V(f).$$  Since $K[T;\sigma]/I_f $ is isomorphic to $K[T;\sigma]\circ f$ (as vector spaces), we conclude that the rank of $f$ is equal to the dimension of $V(f)$ (as a right vector subspace of $K[[T;\sigma]]$). 
	\end{proof}

As an application of this theorem, we give the following result:
	\begin{corollary}\label{(C)Nonzeroideal}
		A twisted series is rational iff its syntactic left ideal is nonzero.  
	\end{corollary}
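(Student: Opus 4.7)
The plan is to reduce the statement to Theorem \ref{(T)Finiterank}(ii), which already equates rationality of $f$ with finiteness of the rank of $f$, that is, with finiteness of $\dim_K K[T;\sigma]/I_f$. So all I need to do is translate the condition $I_f\neq 0$ into the condition that the rank is finite, using the structural facts about $K[T;\sigma]$ gathered in Section 2.

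For the forward direction, suppose $f$ is rational. Then by Theorem \ref{(T)Finiterank}(ii) the quotient $K[T;\sigma]/I_f$ is finite-dimensional over $K$. Since $K[T;\sigma]$ itself is infinite-dimensional over $K$ (it has the countable basis $1,T,T^2,\ldots$), the ideal $I_f$ cannot be zero.

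For the reverse direction, suppose $I_f\neq 0$. As noted in Section 2.2, $K[T;\sigma]$ is a left principal ideal domain, so $I_f=K[T;\sigma]P_0$ for some nonzero $P_0$ of some degree $d$. Invoking the right Euclidean algorithm from Section 2.2, every $Q\in K[T;\sigma]$ admits a decomposition $Q=SP_0+R$ with $\deg R<d$, so the cosets of $1,T,\ldots,T^{d-1}$ span $K[T;\sigma]/I_f$ as a left $K$-vector space. Hence the rank of $f$ is at most $d$, and Theorem \ref{(T)Finiterank}(ii) gives that $f$ is rational.

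There is no real obstacle here: the heavy lifting has already been done in Theorem \ref{(T)Finiterank}(ii), and the only additional ingredients are the left-principal-ideal-domain property of $K[T;\sigma]$ and the right Euclidean algorithm, both recorded in Section 2.2.
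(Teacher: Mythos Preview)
Your proof is correct and follows essentially the same approach as the paper: both reduce to Theorem \ref{(T)Finiterank}(ii) and then use the left-PID property together with the right Euclidean algorithm to show that a nonzero left ideal of $K[T;\sigma]$ has finite codimension. The paper's version is more terse, but the ingredients and logic are identical.
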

	\begin{proof}
		It is enough to note that since $K[T;\sigma]$ is a left principal ideal domain and satisfies the  Euclidean algorithm for right division, any nonzero ideal of $K[T;\sigma]$ is finite-co-dimensional in $K[T;\sigma]$.
	\end{proof}
		
	\end{subsection}

\end{section} 
\begin{section}{Reduced $\sigma$-linear representations} 
	A $\sigma$-linear representation of a twisted series $f$ is called \textit{reduced} if it has  the smallest dimension among all $\sigma$-linear representations of $f$. In this section, we study some properties of reduced  $\sigma$-linear representations.
	\begin{subsection}{Ranks and reduced $\sigma$-linear representations} 	
	We begin with the following lemma. 	
	\begin{lemma}\label{(L)Rankrepresentation}
		The rank of a rational twisted series is equal to the dimension of any reduced $\sigma$-linear representation of $f$.
	\end{lemma}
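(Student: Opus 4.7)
The plan is to sandwich the dimension of a reduced $\sigma$-linear representation between two copies of $\mathrm{rank}(f)$, using Theorem \ref{(T)Finiterank} in one direction and Proposition \ref{(P)RightStableRecog} in the other.

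First, I would invoke Theorem \ref{(T)Finiterank}(i), which already tells us that $\mathrm{rank}(f) \leq \dim(X,A,Y)$ for every $\sigma$-linear representation $(X,A,Y)$ of $f$. In particular, this inequality holds for a reduced representation, giving the bound $\mathrm{rank}(f) \leq m_{\mathrm{red}}$, where $m_{\mathrm{red}}$ denotes the dimension of a reduced $\sigma$-linear representation of $f$.

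For the reverse inequality, the key is to construct a $\sigma$-linear representation whose dimension is exactly $\mathrm{rank}(f)$; then by minimality of $m_{\mathrm{red}}$ we get $m_{\mathrm{red}} \leq \mathrm{rank}(f)$, and combining yields equality. By Theorem \ref{(T)Finiterank}(ii), the rank of $f$ equals $\dim V(f)$, where $V(f) = \sum_{i \geq 0} s^i(f)K$ is the smallest stable right vector subspace of $K[[T;\sigma]]$ containing $f$. Since $f$ is rational, hence $\sigma$-recognizable by Theorem \ref{(T)RecognizableRational}, this dimension is finite, say $r = \mathrm{rank}(f)$. I would then choose a basis $f_1, \ldots, f_r$ of $V(f)$ and apply Proposition \ref{(P)RightStableRecog}(i): setting $F = (f_1, \ldots, f_r)$, one obtains a $\sigma$-linear representation $(F(0), A, Y)$ of $f$ of dimension exactly $r$, where $Y \in K^{r \times 1}$ and $A \in K^{r \times r}$ are determined by $f = FY$ and $s(F) = FA$.

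Combining, $m_{\mathrm{red}} \leq r = \mathrm{rank}(f) \leq m_{\mathrm{red}}$, so $m_{\mathrm{red}} = \mathrm{rank}(f)$, which is the claim. No step looks like a serious obstacle here; the lemma is essentially a formal consequence of the correspondence between stable right subspaces and $\sigma$-linear representations that was already set up in Section 3, together with the identification of $V(f)$ as the right-module realization of $K[T;\sigma]/I_f$ in Theorem \ref{(T)Finiterank}(ii). The only mild care required is to note that the representation produced by Proposition \ref{(P)RightStableRecog}(i) applied to a basis of $V(f)$ achieves the smallest possible dimension, which is what justifies the final squeeze.
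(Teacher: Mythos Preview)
Your proposal is correct and follows essentially the same approach as the paper: invoke Theorem \ref{(T)Finiterank}(i) for the lower bound on the reduced dimension, then apply Proposition \ref{(P)RightStableRecog}(i) to a basis of $V(f)$ together with Theorem \ref{(T)Finiterank}(ii) to produce a $\sigma$-linear representation of dimension exactly $\mathrm{rank}(f)$, and squeeze. The paper's version is simply a terser rendering of the same argument.
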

	\begin{proof}
		According to Part (i) of Theorem \ref{(T)Finiterank}, the dimension of any $\sigma$-linear representation of $f$ is at least the rank of $f$. On the other hand, by Part (i) of Proposition \ref{(P)RightStableRecog}, the vector space $V(f)$ gives rise to a $\sigma$-linear representation  of dimension $\dim V(f)$. Since, by Theorem \ref{(T)Finiterank}, the rank of $f$ is equal to the dimension of $V(f)$, the result follows. 
	\end{proof}	
 In the following proposition, we give more properties of reduced $\sigma$-linear representations.
	\begin{proposition}\label{(P)Idealrepresentation}
		Let $(X,A,Y)$ be a reduced $\sigma$-linear representation of  a twisted series $f\in K[[T;\sigma]]$. Then, we have
		$$I_f=\{\sum_{n}b_nT^n\in K[T;\sigma]\,|\, \sum_{n}b_nA\sigma(A)\cdots\sigma^{n-1}(A)\sigma^{n}(Y)=0\}.$$
		Moreover, we have
		$$
		\sum_{n}KA\sigma(A)\cdots \sigma^{n-1}(A)\sigma^{n}(Y)=K^{r\times 1},$$
		where $r$ is the dimension of $(X,A,Y)$. 
	\end{proposition}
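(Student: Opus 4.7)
The plan is to reuse the left $K$-linear map
$$L\colon K[T;\sigma] \to K^{r\times 1}, \qquad \sum_n b_n T^n \;\longmapsto\; \sum_n b_n\,A\sigma(A)\cdots\sigma^{n-1}(A)\sigma^n(Y),$$
constructed in the proof of Theorem \ref{(T)Finiterank}(i). That proof already shows that $\ker L$ is a left ideal of $K[T;\sigma]$ contained in $\ker L_f$, so Lemma \ref{(L)Largestideal} yields $\ker L \subseteq I_f$. The whole proposition will follow by upgrading this inclusion to an equality, using the hypothesis that $(X,A,Y)$ is reduced.

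The key step is a codimension count. Since $L$ takes values in the $r$-dimensional left $K$-vector space $K^{r\times 1}$, we have $\mathrm{codim}_K \ker L \leq r$. On the other hand, because $(X,A,Y)$ is reduced, Lemma \ref{(L)Rankrepresentation} identifies the rank of $f$ with $r$, and by definition of rank this means $\mathrm{codim}_K I_f = r$. Combining these with the inclusion $\ker L \subseteq I_f$ gives
$$r \;=\; \mathrm{codim}_K I_f \;\leq\; \mathrm{codim}_K \ker L \;\leq\; r,$$
forcing equality throughout. In particular $\ker L = I_f$, which is precisely the first displayed formula of the proposition.

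The same chain forces $\dim_K \mathrm{Im}\,L = r$, i.e.\ $L$ is surjective. Since $L(bT^n) = b\,A\sigma(A)\cdots\sigma^{n-1}(A)\sigma^n(Y)$ for all $b\in K$ and $n\geq 0$, and the monomials $bT^n$ span $K[T;\sigma]$ on the left as a $K$-vector space, the image of $L$ is exactly $\sum_n K\,A\sigma(A)\cdots\sigma^{n-1}(A)\sigma^n(Y)$; surjectivity then gives the second displayed formula. There is no serious obstacle here: the content of the proposition is simply that the codimension bound from Theorem \ref{(T)Finiterank}(i) becomes tight precisely when the representation is reduced, and one needs only to be careful that codimensions are taken with respect to the left $K$-vector space structure, consistent with how $L$ was defined.
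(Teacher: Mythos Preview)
Your proof is correct and follows essentially the same route as the paper's own argument: both reuse the map $L$ from the proof of Theorem \ref{(T)Finiterank}(i), invoke $\ker L\subseteq I_f$, and then use the codimension count together with Lemma \ref{(L)Rankrepresentation} to force $\ker L=I_f$ and hence surjectivity of $L$.
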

	\begin{proof}
		We consider the linear map
			$$L:K[T;\sigma]\to K^{r\times 1},\,\, L(\sum_{n}b_nT^n)= \sum_{n}b_nA\sigma(A)\cdots\sigma^{n-1}(A)\sigma^{n}(Y),$$
			introduced in the proof of  Theorem \ref{(T)Finiterank}. As seen before, $\ker L\subset  I_f$. 
		The co-dimension of $\ker L$ in $K[T;\sigma]$ is not larger than the dimension of $(X,A,Y)$. On the other hand,  the dimension of $(X,A,Y)$ is equal to the rank of $f$, see Lemma \ref{(L)Rankrepresentation}. Since the rank of $f$ is the co-dimension of $I_f$ in $K[T;\sigma]$ and $\ker L\subset I_f$, we see that $\ker L=I_f$. It follows that $L$
		is onto because the dimension of $K[T;\sigma]/\ker L=K[T;\sigma]/I_f$ is $r$. Therefore, we have
		$$
		\sum_{n\geq 0}KA\sigma(A)\cdots \sigma^{n-1}(A)\sigma^{n}(Y)=K^{r\times 1}.$$
	\end{proof}	
	
	\end{subsection}
	\begin{subsection}{$\sigma$-Similarity} 
		Two  $\sigma$-linear representations  $(X_1,A_1,Y_1)$ and $(X_2,A_2,Y_2)$ are said to be $\sigma$-\textit{similar} if there is an invertible matrix $B$ over $K$ such that
		$$X_2=X_1B, \, A_2=B^{-1}A_1\sigma(B), \, Y_2=B^{-1}Y_1.$$ 
		It is easy to see that any two $\sigma$-similar $\sigma$-linear representations give rise to the same twisted series. 
		\begin{theorem}\label{(T)Reducedsimilar}
			 Let $f\in K[[T;\sigma]]$ be a rational series. Then, any two reduced $\sigma$-linear representations of $f$ are $\sigma$-similar. 
		\end{theorem}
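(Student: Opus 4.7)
My plan is to recover the similarity matrix $B$ from the intrinsic invariant $I_f$ of $f$. By Lemma \ref{(L)Rankrepresentation}, both reduced representations have the same dimension $r$ (equal to the rank of $f$). For each $(X_i,A_i,Y_i)$, I reuse the $K$-linear map
$$L_i:K[T;\sigma]\to K^{r\times 1},\qquad L_i\Bigl(\sum_n b_n T^n\Bigr)=\sum_n b_n\, A_i\sigma(A_i)\cdots\sigma^{n-1}(A_i)\sigma^n(Y_i),$$
introduced in the proof of Theorem \ref{(T)Finiterank}. By Proposition \ref{(P)Idealrepresentation}, both $L_1$ and $L_2$ are surjective with kernel $I_f$, so they factor through isomorphisms $K[T;\sigma]/I_f\cong K^{r\times 1}$. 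The composition $L_2\circ L_1^{-1}$ is therefore a $K$-linear automorphism of $K^{r\times 1}$, given by left multiplication by some invertible matrix, which I write as $B^{-1}\in K^{r\times r}$. Thus $L_2(P)=B^{-1}L_1(P)$ for all $P\in K[T;\sigma]$, and this $B$ is my candidate intertwiner.

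I then verify the three similarity equations. Taking $P=1$ gives $Y_2=L_2(1)=B^{-1}L_1(1)=B^{-1}Y_1$. For the $X$-equation, note that $L_f(P)=X_iL_i(P)$ by the very definition of the representation, so $X_1L_1(P)=X_2L_2(P)=X_2B^{-1}L_1(P)$ for all $P$; surjectivity of $L_1$ forces $X_1=X_2B^{-1}$, i.e.\ $X_2=X_1B$.

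The heart of the proof is $A_2=B^{-1}A_1\sigma(B)$. A direct calculation (using $Tb=\sigma(b)T$ and the entrywise action of $\sigma$ on matrices) establishes the key identity $L_i(TP)=A_i\,\sigma(L_i(P))$. Substituting $L_2=B^{-1}L_1$ and using $\sigma(B^{-1})=\sigma(B)^{-1}$ gives
$$B^{-1}A_1\,\sigma(L_1(P))=A_2\,\sigma(B)^{-1}\sigma(L_1(P)),$$
so the matrix $C:=B^{-1}A_1-A_2\sigma(B)^{-1}$ annihilates every vector of the form $\sigma(L_1(P))$. The delicate point is that, when $\sigma$ is not surjective, $\sigma(K^{r\times 1})$ may be a proper subset of $K^{r\times 1}$, so one cannot immediately conclude $C=0$. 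However, since $\sigma(1)=1$, the standard basis vectors $e_1,\dots,e_r$ equal $\sigma(e_1),\dots,\sigma(e_r)$ and lie in the image of $L_1$ by its surjectivity, so $Ce_j=0$ for every $j$, forcing $C=0$.

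The only real obstacle I foresee is this last subtlety: bypassing the possible non-surjectivity of $\sigma$ by exploiting that $\sigma$ fixes $1$ (and hence the standard basis vectors) in order to upgrade the vanishing of $C$ on $\sigma(K^{r\times 1})$ to vanishing on all of $K^{r\times 1}$. Everything else is a matter of collecting the three identities implied by the single intertwining relation $L_2=B^{-1}L_1$.
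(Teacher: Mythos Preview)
Your proof is correct, but it follows a route dual to the paper's. The paper works on the ``output'' side: it considers the row vectors $F_i=X_i(I_r-A_iT)^{-1}\in K[[T;\sigma]]^{1\times r}$, observes (via Proposition~\ref{(P)RightStableRecog}(ii) and the minimality of $r=\dim V(f)$) that the components of each $F_i$ form a basis of the stable right subspace $V(f)$, and takes $B$ to be the change-of-basis matrix with $F_2=F_1B$. The three similarity relations then drop out from $F_i(0)=X_i$, $f=F_iY_i$, and the shift identity $s(F_i)=F_iA_i$ (together with $s(F_1B)=s(F_1)\sigma(B)$ from Lemma~\ref{(L)Propertiesofs}(3)). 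You instead work on the ``input'' side, extracting $B$ from the two isomorphisms $K[T;\sigma]/I_f\cong K^{r\times 1}$ supplied by Proposition~\ref{(P)Idealrepresentation}, and your key identity $L_i(TP)=A_i\,\sigma(L_i(P))$ plays the role that $s(F_i)=F_iA_i$ plays in the paper. The paper's route is marginally cleaner because Lemma~\ref{(L)Propertiesofs}(3) sidesteps the non-surjectivity of $\sigma$ automatically; your route makes the dependence on the intrinsic invariant $I_f$ more transparent and reuses Proposition~\ref{(P)Idealrepresentation} directly, at the cost of the small extra step (which you handle correctly) of noting that $\sigma$ fixes the standard basis vectors so that $C\sigma(K^{r\times 1})=0$ already forces $C=0$.
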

		\begin{proof}
		Let $(X_1,A_1,Y_1)$ and $(X_2,A_2,Y_2)$ be arbitrary reduced $\sigma$-linear representations of $f$. Let $r$ be the rank of $f$. By Lemma \ref{(L)Rankrepresentation}, both $(X_1,A_1,Y_1)$ and $(X_2,A_2,Y_2)$  have dimension $r$. By Part (ii) of Proposition \ref{(P)RightStableRecog}, the right vector spaces
		$$\{X_1(I_r-A_1T)^{-1}Z|Z\in K^{r\times 1}\},$$
		and 
		$$\{X_2(I_r-A_2T)^{-1}Z|Z\in K^{r\times 1}\},$$
		are stable right vector subspaces of $K[[T;\sigma]]$. It follows that these vector subspaces are equal to $V(f)$ since $(X_1,A_1,Y_1)$ and $(X_2,A_2,Y_2)$ are reduced. As the components of each of the vectors  
		$$F_1= X_1(I_r-A_1T)^{-1} \,\text{ and } \, F_2= X_2(I_r-A_2T)^{-1},$$ form a basis for $V(f)$, we see that there exists an  invertible matrix  $B$ of order $r$ over $K$ such that $F_2=F_1B$. Now, we have
		$$F_2=F_1B\implies F_2(0)=F_1(0)B\implies X_2=X_1B,$$
		$$f=F_1Y=F_2Y_2\implies F_1Y_1=F_1BY_2\implies Y_2=B^{-1}Y_1,$$
		and
		$$s(F_1)=F_1A_1 \, \text{ and }\,  s(F_2)=F_2A_2\implies A_2= B^{-1}A_1\sigma(B),$$
		finishing the proof. 
			
		\end{proof}
		
	\end{subsection}

\end{section} 
\begin{section}{Minimal fractions} 
		In this section, we introduce the concepts of minimal fraction and minimal twisted polynomial. It turns out that these two concepts are quite related. We also define the class of regular twisted series and show that they form a commutative algebra over $K$ under the Hadamard product. Throughout this section, we assume that $\sigma$ is a filed automorphism of  $K$. 	
	\begin{subsection}{Minimal fractions} 	
	 A fraction  $P^{-1}Q\in K(T;\sigma)$ is called reduced if $P$ and $Q$ are left co-prime, that is, there does not exist a nonconstant polynomial $R$ such that $$P\in RK[T;\sigma] \text{ and } Q\in RK[T;\sigma].$$ 
	 Note that every fraction $P^{-1}Q\in K(T;\sigma)$ is equal to a reduced fraction since $(RP)^{-1}(RQ)=P^{-1}Q$ for all (nonzero) $P,Q,R\in K[T;\sigma]$. A reduced fraction $P^{-1}Q\in K(T;\sigma)$ which also satisfies $P(0)=1$ will be called a \textit{minimal left fraction}. The notion of a minimal right fraction is defined in a similar way. 
	 
	 Using the Euclidean algorithm in $K[T;\sigma]$, one can show that skew polynomials $P,Q\in K[T;\sigma]$ are left co-prime iff they are right co-maximal, that is, 
	 $$P  K[T;\sigma]+QK[T;\sigma]=K[T;\sigma].$$
		\begin{lemma}\label{(L)reduced}
			Every rational twisted series  has a unique minimal left fraction. Moreover, if $P^{-1}Q$ is a minimal left fraction and $P^{-1}Q=P^{-1}_1Q_1$, where $P_1,Q_1\in K[T;\sigma]$, then $P_1=SP$ and $Q_1=SQ$ for some $S\in K[T;\sigma]$. A similar result holds for right fractions.
		\end{lemma}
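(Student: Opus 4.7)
The plan is to treat existence, the ``moreover'' clause, and uniqueness of the minimal fraction in turn, with uniqueness falling out of the moreover clause by a short symmetry argument. For existence, starting from any representation $f=P^{-1}Q$, I would extract a greatest common left factor using that $K[T;\sigma]$ is a right principal ideal domain (available here because $\sigma$ is an automorphism). Writing $PK[T;\sigma]+QK[T;\sigma]=DK[T;\sigma]$ gives $P=DP_0$, $Q=DQ_0$, and the B\'ezout identity $D=PA+QB$ yields $P_0A+Q_0B=1$ after cancelling $D$, showing that $P_0,Q_0$ are right co-maximal, hence left co-prime, and that $f=P_0^{-1}Q_0$. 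Rescaling by $P_0(0)^{-1}$ will then produce a minimal left fraction. The place I expect the main obstacle is justifying that $P_0(0)\neq 0$: if it vanished then, using surjectivity of $\sigma$, one can write $P_0=TP_0'$, and $Q_0=P_0f\in K[[T;\sigma]]$ would force $Q_0(0)=0$ as well, so $T$ would be a common left factor of $P_0,Q_0$, contradicting left co-primality.

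For the ``moreover'' clause, assume $P^{-1}Q$ is minimal and $P^{-1}Q=P_1^{-1}Q_1$. The strategy is to capture the change of fraction by a single left multiplier. Using the left Ore condition, pick $U,V\in K[T;\sigma]$ with $V\neq 0$ and $UP=VP_1$; since $VQ_1=VP_1P^{-1}Q=UPP^{-1}Q=UQ$, one also has $UQ=VQ_1$. Left co-primality of $P,Q$, restated as right co-maximality just before the lemma, provides $A,B\in K[T;\sigma]$ with $PA+QB=1$, and the computation
$$U=U(PA+QB)=(UP)A+(UQ)B=V(P_1A+Q_1B)=VS,$$
with $S:=P_1A+Q_1B\in K[T;\sigma]$, lets me cancel the nonzero $V$ from $VP_1=VSP$ and $VQ_1=VSQ$ in the integral domain $K[T;\sigma]$ to conclude $P_1=SP$ and $Q_1=SQ$.

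For uniqueness, if $P^{-1}Q$ and $P_1^{-1}Q_1$ are both minimal left fractions for $f$, applying the moreover clause in both directions yields $S,S'\in K[T;\sigma]$ with $P_1=SP$ and $P=S'P_1$, so $S'S=1$ forces $S$ to be a unit in $K[T;\sigma]$, i.e.\ a nonzero constant; evaluating $P_1=SP$ at $0$ with $P(0)=P_1(0)=1$ pins $S=1$. The right-fraction statement is the mirror image of the above, with $\sigma$ being an automorphism guaranteeing that $K[T;\sigma]$ is simultaneously a left and right PID and both left and right Ore.
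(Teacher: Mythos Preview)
Your argument is correct. Both existence (via extracting a greatest common left divisor using the right PID structure and B\'ezout), the verification that $P_0(0)\neq 0$ (using surjectivity of $\sigma$ to factor out $T$ on the left), the ``moreover'' clause (via the left Ore condition and right co-maximality), and uniqueness (by the symmetry trick) go through as you describe.

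Your route, however, is genuinely different from the paper's. The paper argues by degree minimality: it picks a representation $f=P^{-1}Q$ with $\deg P$ smallest, shows $P(0)\neq 0$ by noting that otherwise $f=(T^{-1}P)^{-1}(T^{-1}Q)$ would lower the degree, and then proves the ``moreover'' clause by right Euclidean division $P_1=SP+R$, observing that $Rf\in K[T;\sigma]$ forces $R=0$ by the minimality of $\deg P$. Uniqueness and reducedness then follow immediately. So the paper never invokes the Ore condition or an explicit B\'ezout identity, relying only on the right division algorithm; as a byproduct it shows that the minimal denominator actually has the least possible degree among all denominators. Your approach, by contrast, is the standard structural PID argument and makes the role of co-maximality explicit; it is perhaps cleaner conceptually but uses slightly heavier machinery (Ore, two-sided PID) where the paper gets by with Euclidean division alone.
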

		\begin{proof}
			Let $f\in K[[T;\sigma]]$ be rational. First, we prove the existence part. Consider a fraction $f=P^{-1}Q$ with $\deg P$ as small as possible. If $P(0)=0$, then $Pf=Q$ implies that $Q(0)=0$ and $f=(T^{-1}P)^{-1}(T^{-1}Q)$ would give a fraction with $\deg(T^{-1}P)<\deg P$, a contradiction. Therefore, $P(0)\neq 0$.  We may assume that $P(0)=1$ since $f=(aP)^{-1}(aQ)$ for all nonzero $a\in K$. Clearly, the fraction $f=P^{-1}Q$ is minimal. 
			
			To prove the uniqueness part, let $f=P_1^{-1}Q_1$ where $P_1,Q_1\in K[T;\sigma]$. Using the Euclidean division algorithm, we write
			$$P_1=SP+R, \text{ where } R,S\in  K[T;\sigma] \text{ and } \deg R<\deg P.$$
			It follows that $P_1f=SPf+Rf$. Since $P_1f, Pf\in K[T;\sigma]$, we see that $Rf\in K[T;\sigma]$. By the minimality of $\deg P$, we must have $R=0$. It follows that 
			$P_1=SP$ and $Q_1=SQ$. If $P_1^{-1}Q_1$ is minimal, $S$ must be a constant. It follows $P_1=P$ and $Q_1=Q$ since $P(0)=P_1(0)=1$. This proves the uniqueness of minimal left fraction. The existence and uniqueness of minimal right fractions can be proved in a similar way. 
		\end{proof}
		The unique fraction introduced in this lemma is called the \textit{minimal left fraction} of $f$.   
	\end{subsection}
\begin{subsection}{Twisted reciprocal polynomials} 	
	Given $Q=\sum_{i=0}^ma_iT^i\in  K[T;\sigma]$ with $a_m\neq 0$, the \textit{left and right twisted reciprocal polynomials} of $Q$, denoted by $Q_*$ and  $Q^*$ respectively, are defined as
		$$ Q_*=\sum_{i=0}^{m}\sigma^i(a_{m-i})T^i=\sum_{i=0}^{m}T^ia_{m-i}=T^m(\sum_{i=0}^mT^{-i}a_i)\in K[T;\sigma],$$	
	$$Q^*=\sum_{i=0}^m\sigma^{i-m}(a_{m-i})T^i=(\sum_{i=0}^mT^{-i}a_i)T^m\in  K[T;\sigma].$$

\end{subsection}
	\begin{subsection}{Minimal fractions and rational twisted series}
		Let $f\in K[[T;\sigma]]$ be rational. Then, the syntactic left ideal $I_f$ of $f$ is nonzero, see Corollary \ref{(C)Nonzeroideal}. Since $ K[T;\sigma]$  is a (left and right) principal ideal domain, there must exist a unique twisted polynomial $R\in K[T;\sigma]$ with the leading coefficient equal to $1$ such that 
		$$I_f=K[T;\sigma]R.$$ 
		The	twisted polynomial $R$ is called the \textit{minimal twisted polynomial} of $f$. Note that the rank of $f$ is equal to the degree of $R$ since the rank of $f$ is equal to the dimension of the right vector space $K[T;\sigma]/I_f$. 
		
		In the following  proposition, we examine the relation between minimal right fractions and minimal twisted polynomials. 
		\begin{proposition}\label{(P)reciprocal}
			Let $f\in K[[T;\sigma]]$ be rational. Suppose that $PQ^{-1}$ is the minimal right fraction of $f$ and $R$ is the minimal twisted polynomial of $f$. Then, $T^kQ_*=R$ and $R^*=Q$, where $k=\max(0,\deg P-\deg Q+1)$. 
		\end{proposition}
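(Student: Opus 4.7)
My plan is to establish $T^k Q_* = R$ by a two-sided degree argument, and then obtain $R^* = Q$ by inverting the definition of the reciprocal. Write $m = \deg Q$ throughout.

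First I would show $T^k Q_* \in I_f$. Using formula (\ref{Pof}) for $Q_* = \sum_{i=0}^m \sigma^i(q_{m-i}) T^i$, the coefficient of $T^n$ in $Q_* \circ f$ is, after reindexing $j = m - i$, exactly $\sum_{j=0}^m a_{n+m-j}\,\sigma^{n+m-j}(q_j)$, which is the coefficient of $T^{n+m}$ in $fQ = P$. Hence $Q_* \circ f = s^m(P)$. Since $T \circ g = s(g)$ under the $K[T;\sigma]$-action, $(T^k Q_*) \circ f = T^k \circ (Q_* \circ f) = s^{k+m}(P)$, which vanishes exactly when $k + m > \deg P$. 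The prescribed value $k = \max(0, \deg P - m + 1)$ is the smallest nonnegative integer achieving this, so $T^k Q_* \in I_f$; moreover its leading coefficient is $\sigma^{k+m}(q_0) = 1$ since $Q(0) = 1$, so it is monic of degree $k + m$.

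Next I would obtain the matching lower bound $\deg R \geq k + m$ by reversing the calculation. For any monic $R' = \sum_{i=0}^n r_i T^i \in I_f$, setting $P' := f R'^*$ and unwinding the definition of the right reciprocal, the coefficient of $T^N$ in $P'$ for $N \geq n$ equals $\sum_{i=0}^n a_{(N-n)+i}\,\sigma^{N-n}(r_i)$, which vanishes by $R' \circ f = 0$. Thus $P' \in K[T;\sigma]$ with $\deg P' \leq n - 1$, giving a right fraction $f = P'(R'^*)^{-1}$. I would then invoke the right-sided analog of Lemma \ref{(L)reduced} alluded to at the end of that lemma (valid since $\sigma$ is an automorphism, so $K[T;\sigma]$ admits left Euclidean division) to produce $\tilde S \in K[T;\sigma]$ with $P' = P\tilde S$ and $R'^* = Q\tilde S$. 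Then $\deg R'^* = m + \deg \tilde S \geq m$ forces $n \geq m$, while $\deg P + \deg \tilde S = \deg P' \leq n - 1$ forces $n \geq \deg P + 1$, so $n \geq \max(\deg P + 1, m) = k + m$.

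Applying this with $R' = R$ gives $\deg R \geq k + m$, and combining with $T^k Q_* \in I_f = K[T;\sigma]R$ (which gives $\deg R \leq k + m$), the two monic polynomials $T^k Q_*$ and $R$ have the same degree and so coincide. Finally, $R^* = Q$ is a direct verification: writing out the coefficients of $T^k Q_*$ and applying the definition of $(\cdot)^*$, the nested prefactors $\sigma^{j-(k+m)}$ and $\sigma^{k+m-j}$ cancel and the sum collapses to $\sum_i q_i T^i = Q$. The principal obstacle is the lower-bound step, which requires the right-sided uniqueness of minimal fractions and careful tracking of the formal exponent $n$ versus the actual degree of $R'^*$; the other two steps are essentially formal manipulations of formula (\ref{Pof}).
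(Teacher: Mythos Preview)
Your proof is correct and follows essentially the same strategy as the paper's: both arguments establish $T^kQ_*\in I_f$ via the coefficient identity linking $Q_*\circ f$ to the high coefficients of $fQ=P$, and both obtain the matching degree bound by showing that $R\in I_f$ forces $fR^*\in K[T;\sigma]$ of degree $<\deg R$ and then invoking the right-sided version of Lemma~\ref{(L)reduced}. Your presentation is slightly cleaner in packaging the first step as the identity $Q_*\circ f=s^m(P)$, and you run the second step for an arbitrary $R'\in I_f$ rather than just $R$, but these are organizational rather than substantive differences.
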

		\begin{proof}
			Let $$f=\sum_{i\geq 0}a_iT^i,\, R=\sum_{i=0}^rb_iT^i,\,Q=\sum_{i=0}^qc_iT^{i},$$ where $r=\deg R$ and $q=\deg Q$.  Since $R\circ f=0$, Equation \ref{Pof} implies that 
			$$ \sum_{i=0}^r a_{n+i}\sigma^n(b_i)=0,$$
			for all $n\geq 0$. We have
			$$fR^*=\sum_{n\geq0} \left( \sum_{i+j=n}a_i\sigma^i(\sigma^{j-r}(b_{r-j}))\right) T^n.$$
			Since
			\begin{align*}
				\sum_{i+j=n+r}a_i\sigma^i(\sigma^{r-j}(b_{r-j}) &=\sum_{j=0}^r a_{n+r-j}\sigma^{n+r-j}(\sigma^{j-r}(b_{r-j}) \\
				&=\sum_{j=0}^r a_{n+r-j}\sigma^{n}(b_{r-j})\\
				&=\sum_{j=0}^r a_{n+j}\sigma^{n}(b_{j})=0,
			\end{align*} 
			for all $n\geq 0$, we see that $fR^*\in   K[T;\sigma]$, and moreover,  $\deg(fR^*)<r$. By Lemma \ref{(L)reduced}, $R^*=QS$ and $fR^*=PS$ for some $S\in  K[T;\sigma]$. In particular, we have 
			\begin{equation}\label{degpleqr}
				p\leq \deg P+\deg S=\deg(fR^*)<r,
			\end{equation}
			and 
			\begin{equation}\label{degqleqr}
				q\leq \deg Q +\deg S =\deg R^*\leq r,
			\end{equation}
			 where $p=\deg P$.
			  Since
			$$P=fQ=\sum_{n\geq 0} \left( \sum_{i+j=n}a_i\sigma^i(c_{j}) \right) T^n,$$
			we see that 
			$$\sum_{i+j=n}a_i\sigma^i(c_{j})=0,$$
			for all $n>p=\deg P$. We can rewrite this equation as
			$$\sum_{i=\max(0,n-q)}^na_i\sigma^i(c_{n-i})=0,$$
			for all $n>p=\deg P$.
			It follows that
				\begin{align*}
				\left( \sum_{i=k}^{q+k}\sigma^i(c_{q+k-i})T^i\right)\circ f &=\sum_{n\geq 0}\left(\sum_{i=k}^{q+k} a_{n+i}\sigma^n(\sigma^i(c_{q+k-i})) \right) T^n \\
				&=\sum_{n\geq 0}\left(\sum_{i=k}^{q+k} a_{n+i}\sigma^{n+i}(c_{q+k-i}) \right) T^n\\
				&=\sum_{n\geq 0}\left(\sum_{i=n+k}^{n+q+k} a_{i}\sigma^{i}(c_{n+q+k-i}) \right) T^n\\
				&=\sum_{n\geq 0}\left(\sum_{i=max(0,(n+q+k)-q)}^{n+q+k} a_{i}\sigma^{i}(c_{n+q+k-i}) \right) T^n=0,
			\end{align*} 
			since $n+q+k>p$ for all $n\geq 0$. 
			Therefore, we have $$\sum_{i=k}^{q+k}\sigma^i(c_{q+k-i})T^i\in I_f,$$ which implies that 
			$$T^kQ_*=T^k\sum_{i=0}^{q}T^ic_{q-i}=\sum_{i=k}^{q+k}\sigma^i(c_{q+k-i})T^i=S_1R,$$
			for some $S_1\in K[T;\sigma]$. 
			In particular, we have $r \leq q+k$. If $k=0$,  since $q\leq r$ by Inequality  \ref{degqleqr}, $r$ must be equal to $q$, which implies that 
			$$ T^kQ_*=\sum_{i=0}^{q}T^ic_{q-i}=R.$$
			 If $k>0$, then $r\leq q+k=p+1$ and $p<r$ (see Inequality \ref{degpleqr}) imply that $r=q+k$. Since $c_0=Q(0)=R(0)=1$, we obtain 
			$$ T^kQ_*=R.$$
			To prove $R^*=Q$, we write
			$$R^*=(T^kQ_*)^*=\left( \sum_{i=k}^{q+k}\sigma^i(c_{q+k-i})T^i\right)^*=\sum_{i=0}^q\sigma^{i-q-k}(\sigma^{i-q-k}(c_{i}))T^i=Q.$$
		\end{proof}
	Here is an application of this proposition:
	\begin{corollary}
		Let $f\in K[[T;\sigma]]$ be rational. Suppose that $PQ^{-1}$ is the minimal right fraction of $f$ and $R$ is the minimal twisted polynomial of $f$. Then, the rank of $f$ is equal to  $\max(\deg Q,\deg P+1)$. 
	\end{corollary}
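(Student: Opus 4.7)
The plan is to combine Proposition \ref{(P)reciprocal} with the fact, recorded in Subsection 6.3, that the rank of $f$ equals the degree of its minimal twisted polynomial $R$. Once those two ingredients are in hand, the result will follow from a short degree computation, so I expect no serious obstacle.

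First I would verify that $\deg Q_* = \deg Q$. Since $PQ^{-1}$ is the \emph{minimal} right fraction of $f$, the right-sided analogue of the convention from Subsection 6.1 forces $Q(0) = 1$. Writing $Q = c_0 + c_1 T + \cdots + c_q T^q$ with $q = \deg Q$ and $c_0 = 1$, the coefficient of $T^q$ in
$$Q_* = \sum_{i=0}^{q} \sigma^i(c_{q-i}) T^i$$
is $\sigma^q(c_0) = \sigma^q(1) = 1 \neq 0$, so $\deg Q_* = q$.

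Second, I would apply Proposition \ref{(P)reciprocal} to obtain $R = T^k Q_*$ with $k = \max(0, \deg P - \deg Q + 1)$, and therefore
$$\operatorname{rank}(f) = \deg R = k + \deg Q_* = k + \deg Q.$$
It then only remains to split into two cases on the value of $k$. If $\deg P < \deg Q$ then $k = 0$ and $k + \deg Q = \deg Q = \max(\deg Q, \deg P + 1)$. If $\deg P \geq \deg Q$ then $k = \deg P - \deg Q + 1$ and $k + \deg Q = \deg P + 1 \geq \deg Q$, so once more $k + \deg Q = \max(\deg Q, \deg P + 1)$.

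The hardest step is really Proposition \ref{(P)reciprocal} itself, which has already been proved; here the only point deserving attention is the verification that $Q(0) = 1$ prevents $\deg Q_*$ from dropping below $\deg Q$. Beyond that, the argument is pure degree bookkeeping.
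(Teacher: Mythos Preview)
Your proposal is correct and follows essentially the same approach as the paper: both use Proposition~\ref{(P)reciprocal} to obtain $\deg R = k + \deg Q$ with $k = \max(0,\deg P - \deg Q + 1)$, then observe that $\deg Q + \max(0,\deg P - \deg Q + 1) = \max(\deg Q,\deg P+1)$, and finally invoke the fact that the rank of $f$ equals $\deg R$. Your write-up is slightly more explicit in verifying $\deg Q_* = \deg Q$ via $Q(0)=1$ and in carrying out the case split, but the underlying argument is identical.
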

	\begin{proof}
		By Proposition \ref{(P)reciprocal}, we have 
		$$\deg Q =\deg R-\max(0,\deg P-\deg Q+1)\implies \deg R=\max(\deg Q,\deg P+1).$$ 
		Since $\deg R$ is equal to the rank of $f$, the result follows. 
		
	\end{proof}
	
	\end{subsection}
\begin{subsection}{Regular twisted series} 	
	By a \textit{regular twisted series}, we mean a rational twisted series in $K[[T;\sigma]]$ which has a negative degree as an element of $K(T;\sigma)$. It follows from the properties of the degree function that the set $H(\sigma)$ of all regular twisted series in $K[[T;\sigma]]$  is a subring of $K[[T;\sigma]]$. In what follows, we give a number of characterizations of regular twisted series. 
	
	\begin{lemma}\label{(L)CharacRegular}
		Let   $f\in K[[T;\sigma]]$. Then, the following are equivalent:\\
		(i) $f\in H(\sigma)$.\\
		(ii) The minimal twisted polynomial of $f$ has a nonzero constant term.\\
		(iii) For any reduced $\sigma$-linear representation $(X,A,Y)$ of $f$, the matrix $A$ is invertible. \\
		(iv) There exists a  $\sigma$-linear representation $(X,A,Y)$ of $f$ such that $A$ is invertible. 
	\end{lemma}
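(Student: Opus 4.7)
The plan is to prove the four conditions pairwise equivalent via (i)$\Leftrightarrow$(ii), (ii)$\Leftrightarrow$(iii), and (iii)$\Leftrightarrow$(iv), each using a different tool from the preceding material.

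(i)$\Leftrightarrow$(ii) follows directly from Proposition~\ref{(P)reciprocal}. If $f = PQ^{-1}$ is the minimal right fraction of $f$, the proposition gives $R = T^k Q_*$ with $k = \max(0,\deg P - \deg Q + 1)$. The constant term of $Q_*$ equals the leading coefficient of $Q$ (nonzero), so $R(0) \neq 0$ if and only if $k = 0$, if and only if $\deg P < \deg Q$, if and only if $\deg f < 0$, which is exactly $f \in H(\sigma)$.

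For (ii)$\Leftrightarrow$(iii), fix a reduced representation $(X,A,Y)$ of $f$. By Proposition~\ref{(P)Idealrepresentation}, the associated map $L$ is surjective with $\ker L = I_f$, and a short direct computation yields the key identity $L(TP) = A\sigma(L(P))$. For (ii)$\Rightarrow$(iii): if $A$ is not invertible, pick $u \neq 0$ with $Au = 0$, set $v = \sigma^{-1}(u)$, find $P$ with $L(P) = v$ by surjectivity, and observe $L(TP) = A\sigma(v) = 0$; so $TP \in I_f = K[T;\sigma] R$. Writing $TP = QR$ and comparing constant terms forces $Q(0) = 0$ (since $R(0) \neq 0$), hence $Q = TQ'$, and by cancellation in the domain $K[T;\sigma]$ we get $P = Q'R \in I_f$, contradicting $L(P) = v \neq 0$. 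For (iii)$\Rightarrow$(ii): if $R(0) = 0$, write $R = TR_1$; since $R \in \ker L$, the identity gives $A\sigma(L(R_1)) = 0$, and invertibility of $A$ together with injectivity of $\sigma$ yield $L(R_1) = 0$, so $R_1 \in I_f$ with $\deg R_1 < \deg R$, contradicting the minimality of $R$.

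For (iii)$\Leftrightarrow$(iv), (iii)$\Rightarrow$(iv) is immediate since a reduced representation of $f$ exists whenever $f$ is rational. For (iv)$\Rightarrow$(iii), given $(X,A,Y)$ with $A$ invertible, Proposition~\ref{(P)RightStableRecog}(ii) produces a finite-dimensional stable subspace $U$ containing $f$, with $s(F) = FA$ where $F = X(I_m - AT)^{-1}$. The semilinear self-map $\psi: \beta \mapsto A\sigma(\beta)$ of $K^{m\times 1}$ is bijective, and the identity $s(F\beta) = F\psi(\beta)$ shows that $\psi$ preserves $\ker(\beta \mapsto F\beta)$ and descends through the surjection $K^{m\times 1} \twoheadrightarrow U$ to $s|_U$; hence $s|_U$ is bijective. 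Restricting to the $s$-stable finite-dimensional subspace $V(f) \subset U$ gives $s|_{V(f)}$ injective and therefore bijective, so the reduced representation built from a basis of $V(f)$ via Proposition~\ref{(P)RightStableRecog}(i) has matrix $A_0$ representing this bijection (in the $\sigma$-semilinear sense), which is invertible because $\sigma$ is. Theorem~\ref{(T)Reducedsimilar} then propagates invertibility to every reduced representation.

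The main obstacle I anticipate is in (iv)$\Rightarrow$(iii): the space $U$ may be strictly larger than $V(f)$ and the components of $F$ need not be linearly independent, so some care is required to transfer ``$A$ invertible'' to ``the matrix of $s|_{V(f)}$ is invertible.'' The semilinear-quotient argument sketched above is what makes this passage work; the remaining implications are essentially mechanical once the identity $L(TP) = A\sigma(L(P))$ is in hand.
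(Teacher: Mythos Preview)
Your proof is correct, but it differs from the paper's in two places worth noting.

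For (ii)$\Rightarrow$(iii), the paper argues via the \emph{left} kernel of $A$: from $R\in I_f$ one gets $b_0Y + b_1A\sigma(Y) + \cdots = 0$, so $ZA=0$ forces $ZY=0$; combined with the span condition $\sum_n KA\sigma(A)\cdots\sigma^{n-1}(A)\sigma^n(Y)=K^{r\times 1}$ from Proposition~\ref{(P)Idealrepresentation}, this gives $Z=0$. Your route via the right kernel, the identity $L(TP)=A\sigma(L(P))$, and left cancellation of $T$ is equally valid and has the virtue that the same identity immediately handles (iii)$\Rightarrow$(ii), which the paper does not prove directly.

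The more substantial divergence is in closing the cycle. The paper proves (iv)$\Rightarrow$(ii): it takes the map $L$ attached to the given (possibly non-reduced) representation, shows $\ker L$ is a left ideal generated by some $P$, and proves $P(0)\neq 0$ by a degree-reduction contradiction using invertibility of $A$ and of $\sigma$; since $\ker L\subset I_f$, the generator $R$ of $I_f$ right-divides $P$ and hence also has nonzero constant term. Your (iv)$\Rightarrow$(iii) instead passes through the geometry of the shift: the semilinear bijection $\psi$ descends to $s|_U$, restricts to a bijection on $V(f)$, and then Theorem~\ref{(T)Reducedsimilar} transports invertibility to every reduced representation. Both are clean; the paper's argument is shorter and avoids invoking the uniqueness theorem, while yours makes explicit the conceptual point that regularity is exactly bijectivity of $s$ on $V(f)$.
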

	\begin{proof}
	The equivalence of (i) and (ii) follows from Proposition \ref{(P)reciprocal} since the minimal twisted polynomial of $f$ has a nonzero constant term iff the number $k$ in Proposition \ref{(P)reciprocal} is zero, which in turn is equivalent to $f$ belonging to $H(\sigma)$. 
	
	 (ii)$\implies$(iii): Let $(X,A,Y)$ be an $r$-dimensional  reduced $\sigma$-linear representation of $f$. Let $$R=b_0+b_1T+\cdots+ b_{r-1}T^{r-1}+b_rT^r,$$
	 be the   minimal twisted polynomial of $f$. Note that $b_r=1$. By Proposition \ref{(P)Idealrepresentation}, we have
	 $$ b_0Y+b_1A\sigma(Y)+\cdots+b_r A\sigma(A)\cdots\sigma^{r-1}(A)\sigma^{r}(Y)=0.$$
	 Since $b_0\neq 0$, we have
	 $$ZA=0\implies ZY=0, \text{ for all } Z\in K^{1\times r}.$$
	 This fact and the equality (see Proposition \ref{(P)Idealrepresentation})
	 $$
	 \sum_{n\geq 0}KA\sigma(A)\cdots \sigma^{n-1}(A)\sigma^{n}(Y)=K^{r\times 1},$$
	 imply that $A$ is invertible.

	Since (iii)$\implies$(iv) is trivial, it only remains to show that (iv) implies (ii). Let $(X,A,Y)$ be an $m$-dimensional  $\sigma$-linear representation of $f$ such that $A$ is invertible. We consider the linear map
	$$L:K[T;\sigma]\to K^{m\times 1},\,\, L(\sum_{n}b_nT^n)= \sum_{n}b_nA\sigma(A)\cdots\sigma^{n-1}(A)\sigma^{n}(Y),$$
	introduced in the proof of  Theorem \ref{(T)Finiterank}. Clearly, the co-dimension of $\ker L$ in $K[T;\sigma]$ (as a left vector space) is at most $m$. Arguing as in the proof of Theorem \ref{(T)Finiterank}, we see that $\ker L$ is a  left ideal of $K[T;\sigma]$. It follows that there exists a (nonzero) polynomial $P\in K[T;\sigma]$ such that 
	$$\ker L=K[T;\sigma]P.$$  
	Let $$P=c_0+c_1T+\cdots+ c_{p-1}T^{p-1}+c_pT^p.$$
	 Assume that $c_0=0$. Therefore, we obtain
	$$ c_1A\sigma(Y)+\cdots+c_{p} A\sigma(A)\cdots\sigma^{p-1}(A)\sigma^{p}(Y)=0.$$
	Since $A$ is invertible, it follows that 
	$$ c_1\sigma(Y)+c_2\sigma(A)\sigma^2(Y)+\cdots+c_p \sigma(A)\cdots\sigma^{p-1}(A)\sigma^{p}(Y)=0,$$
	or equivalently, 
	$$ \sigma \left( \sigma^{-1}(c_1)Y+\sigma^{-1}(c_2)A\sigma(Y)+\cdots+\sigma^{-1}(c_p) A\sigma(A)\cdots\sigma^{p-2}(A)\sigma^{p-1}(Y)\right) =0.$$
	Since $\sigma$ is injective, we see that 
	$$ \sigma^{-1}(c_1)Y+\sigma^{-1}(c_2)A\sigma^1(Y)+\cdots+\sigma^{-1}(c_p) A\cdots\sigma^{p-2}(A)\sigma^{p-1}(Y)=0.$$
	It follows that
	$$\sigma^{-1}(c_1)+\sigma^{-1}(c_2)T+\cdots+\sigma^{-1}(c_p) T^{p-1}\in \ker L,$$
	 contradicting the choice of $P$ since $P$ has the least possible degree among the elements of $\ker L$. Therefore, $P$ has a nonzero constant term. Since 
	 $$P\in \ker L\subset I_f=K[T;\sigma]R,$$  
	 $R$ must have a nonzero constant term as well. 
	\end{proof}
		\end{subsection}
\begin{subsection}{The twisted Hadamard algebra $H(\sigma)$} 
	We begin by proving that the Hadamarr product of regular twisted series is regular.  
	\begin{proposition}\label{(P)Hadamardlagbera}
 	If  $f,g\in H(\sigma)$, then $f\odot g\in H(\sigma)$.  
	\end{proposition}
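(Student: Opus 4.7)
The plan is to use characterization (iv) from Lemma \ref{(L)CharacRegular}: it suffices to produce a $\sigma$-linear representation of $f \odot g$ whose matrix is invertible. Since $f, g \in H(\sigma)$, that same lemma gives us $\sigma$-linear representations $(X_1, A_1, Y_1)$ of $f$ and $(X_2, A_2, Y_2)$ of $g$ with $A_1$ and $A_2$ invertible.

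The natural candidate is the Kronecker product representation $(X_1 \otimes X_2,\, A_1 \otimes A_2,\, Y_1 \otimes Y_2)$. First I would verify this really represents $f \odot g$. Writing $a_n = X_1 A_1 \sigma(A_1) \cdots \sigma^{n-1}(A_1) \sigma^n(Y_1)$ and similarly for $b_n$, both scalars in $K$, the product $a_n b_n$ can be expressed as a single matrix product by invoking the mixed-product identity $(M \otimes N)(M' \otimes N') = (MM') \otimes (NN')$ together with the entrywise compatibility $\sigma(M \otimes N) = \sigma(M) \otimes \sigma(N)$. Iterating gives
$$a_n b_n = (X_1 \otimes X_2)\, \prod_{i=0}^{n-1}\sigma^i(A_1 \otimes A_2)\, \sigma^n(Y_1 \otimes Y_2),$$
which is exactly the $n$-th coefficient of $f \odot g$ in the form required by a $\sigma$-linear representation.

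Second, since $A_1$ and $A_2$ are invertible, $A_1 \otimes A_2$ is invertible with inverse $A_1^{-1} \otimes A_2^{-1}$. By Lemma \ref{(L)CharacRegular}, (iv)$\Rightarrow$(i), this forces $f \odot g \in H(\sigma)$.

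The only real subtlety I expect is the commutation of $\sigma$ with the Kronecker product and the fact that coefficient multiplication in $K$ (which is commutative) lines up with the Kronecker product structure; both should be straightforward entrywise checks. Beyond that, everything is algebraic bookkeeping, and no further appeal to rationality, recognizability, or the syntactic ideal is needed.
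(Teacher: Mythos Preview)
Your proof is correct and follows the paper's approach exactly: build the tensor-product representation $(X_1\otimes X_2,\,A_1\otimes A_2,\,Y_1\otimes Y_2)$, check it represents $f\odot g$ via the mixed-product identity and $\sigma(M\otimes N)=\sigma(M)\otimes\sigma(N)$, and conclude by Lemma~\ref{(L)CharacRegular}(iv). The paper labels $\otimes$ ``the Hadamard product of matrices,'' which appears to be a terminological slip for the Kronecker product; your formulation is the right one.
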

	\begin{proof}
 Let $(X,A,Y)$ and $(X_1,A_1,Y_1)$ be $\sigma$-linear representations of $f$ and $g$, respectively. Since we have $\sigma(C\otimes D)=\sigma(C)\otimes \sigma(D)$ for all matrices $C$ and $D$ over $K$, one can see that $$(X\otimes X_1, A\otimes A_1,Y\otimes Y_1)$$ is a $\sigma$-linear representations of $f\odot g$. Here, $C\otimes D$ denotes the Hadamard product of matrices $C$ and $D$. Therefore, we see that $f\odot g$ has a $\sigma$-linear representation with an invertible matrix. By Lemma \ref{(L)CharacRegular}, we conclude that   $f\odot g$  is regular. 
	\end{proof}
	Since sums of regular twisted series are regular, this proposition implies that the set $H(\sigma)$ equipped with the Hadamard product is an algebra over $K$ which we will call the \textit{twisted Hadamard algebra}. The unit element of the twisted Hadamard algebra is the series $$(1-T)^{-1}=\sum_{n\geq 0}T^n.$$  Clearly, the twisted Hadamard algebra is commutative. 
\end{subsection}
\end{section} 

\bibliographystyle{plain}
\bibliography{RPSbiblan}

 \end{document}